\documentclass[reqno]{amsart}
\usepackage{amscd}
\usepackage{amsmath,amssymb}
\allowdisplaybreaks[1]

\def\beq{\begin{equation}}
\def\eeq{\end{equation}}
\def\ba{\begin{array}}
\def\ea{\end{array}}
\def\S{\mathbb S}
\def\R{\mathbb R}

\def\N{\mathbb N}

\newcommand{\W}{{\mathcal W}}



\newtheorem{thm}{Theorem}[section]
\newtheorem{lm}[thm]{Lemma}

\theoremstyle{definition}

\theoremstyle{remark}

\begin{document}
\pagestyle{plain}
\title{The singular anisotropic Adams' type inequality in $\mathbb{R}^n$}
\maketitle

\centerline{\scshape Tao Zhang }
\medskip
{\footnotesize
 \centerline{School of Mathematics and Information Science,  Yantai University }
  \centerline{Yantai, China, tzhytu@163.com }}
    \medskip

\centerline{\scshape Meixia Li }
\medskip
{\footnotesize
 \centerline{School of Mathematics and Information Science,  Yantai University }
  \centerline{Yantai, China, lmxdytu@163.com }}
  \medskip

\centerline{\scshape Fan Yang }
\medskip
{\footnotesize
 \centerline{School of Mathematics and Information Science,  Yantai University }
  \centerline{Yantai, China, fyang202404@163.com }}
  \medskip

\centerline{\scshape   Chunqin Zhou}
\medskip
{\footnotesize
 \centerline{ School of Mathematical Sciences, Shanghai Jiao Tong University}
  \centerline{Shanghai, China, cqzhou@sjtu.edu.cn}
  }

\begin{abstract}
In this paper, using anisotropic rearrangement techniques, we first establish the best constants for the singular anisotropic Adams' type inequality with exact growth in $\mathbb{R}^n$.
Furthermore, by the same trick, we also prove the singular anisotropic Adams' type inequality on bounded domain $\Omega\subset \mathbb{R}^n$.
\end{abstract}

\maketitle
{\bf Keywords:} Singular anisotropic Adams' inequality; Exact growth; Rearrangement

\section{Introduction}
Let $\Omega$ be a bounded domain in $\mathbb{R}^n$. By the Sobolev embedding theorem, when $1\leq p<n$, $W_0^{1,p}(\Omega)\subset L^q(\Omega)$, where
$1\leq q\leq\frac{np}{n-p}$. For the limiting situation $p=n$,
$W_0^{1,n}(\Omega)\subset L^q(\Omega)$, $\forall q\geq1$, but $W_0^{1,n}(\Omega) \nsubseteq L^\infty(\Omega)$. Trudinger \cite{T} proved that $W_0^{1,n}(\Omega)\subset L_{\varphi_n}(\Omega)$,
where $L_{\varphi_n}(\Omega)$ is the Orlicz space associated with the Young function
$\varphi_n(t)=e^{\alpha|t|^{\frac{n}{n-1}}}-1$ for some $\alpha>0$.
Further,
in 1971, Moser \cite{JM} proved the classical Moser-Trudinger inequality:
\begin{equation}\label{1.1}
\sup\limits_{u\in W_0^{1,n}(\Omega),\|\nabla u\|_n\leq1}\int_\Omega e^{\alpha|u|^\frac{n}{n-1}}dx\leq C,
\end{equation}
where $\alpha\leq\alpha_n=n\omega_{n-1}^\frac{1}{n-1}$, $\omega_{n-1}$ is the area of the surface of the $n$-dimension unit ball. Moreover, the constant $\alpha_n$ is sharp, i.e. if $\alpha>\alpha_n$, the supremum in (\ref{1.1}) is infinite.

The subcritical Moser-Trudinger inequality in the entire space $\R^n$
was proved by Adachi and Tanaka \cite{AT}:
\begin{equation}\label{1.11}
\sup\limits_{u\in W^{1,n}(\mathbb{R}^n),\|\nabla u\|_n\leq1}\int_{\mathbb{R}^n}\phi(\alpha|u(x)|^\frac{n}{n-1})dx\leq C\|u\|_n^n,
\end{equation}
where $0<\alpha<\alpha_n=n\omega_{n-1}^\frac{1}{n-1}$, $\phi(t)=e^t-\sum\limits_{j=0}^{n-2}\frac{t^j}{j!}$. It is well know that the inequality (\ref{1.11}) is not valid for $\alpha=\alpha_n$.

By strengthening the norm,  if the Dirichlet norm is replaced by the standard
Sobolev norm, namely
$$
\| u \|_{W^{1,n}(\mathbb{R}^n)}=\|\nabla u\|_{L^{n}(\mathbb{R}^n)}+\|u\|_{L^{n}(\mathbb{R}^n)},
$$
Ruf \cite{R} and Li and Ruf \cite{LR} showed the critical Moser-Trudinger type inequality in $\R^2$  and in $\R^n$ respectively as follows
\begin{equation}\label{1.2.0}
\sup\limits_{u\in W^{1,n}(\R^n), \| u \|_{W^{1,n}(\R^n)}\leq 1}\int_{\R^n}\phi(\alpha_n |u(x)|^\frac{n}{n-1})dx\leq C,
\end{equation}
and the constant $\alpha_n$ is sharp. In particular, Lam, Lu and Zhang \cite{LLZ} showed that it is equivalent between the subcritical Moser-Trudinger inequality (\ref{1.11}) with the critical Moser-Trudinger inequality (\ref{1.2.0}).

Later, Ibrahim, Masmoudi and Nakanishi \cite{IMN} established the Moser-Trudinger type inequality with the exact growth in $\R^2$ under Dirichlet norm:
\begin{equation*}
 \int_{\R^2} \frac{e^{4\pi u^2}-1}{(1+|u|)^2}dx\leq C \| u\|_2^2,  \quad \forall u\in W^{1, 2}(\R^2) \text{ with } \|\nabla u\|_2\leq 1.
\end{equation*}

There are many extensions involving Moser-Trudinger type inequalities, we refer to \cite{LT} \cite{LL1} \cite{LL2} \cite{LLZ2} and the references therein.
For the existence of extremal function of the Moser-Trudinger inequalities, we refer to \cite{CC} \cite{F} \cite{Li} \cite{LLZ3}, etc.
These inequalities play important roles in geometric analysis and partial differential equations, see \cite{C} \cite{CY} \cite{FMR}
\cite{O} \cite{LLZ3} \cite{LL3} \cite{LW}.

In 1988, Adams \cite{A} extended the classical Moser-Trudinger inequality to the higher order space $W_0^{m,\frac{n}{m}}(\Omega)$.
Let
\begin{equation*}
\nabla^mu=
\begin{cases}
\nabla \Delta^{\frac{m-1}{2}}u,\text{ when }m\text{ is odd},\\
\Delta^{\frac{m}{2}}u,\text{ when }m\text{ is even}.
\end{cases}
\end{equation*}
 Adams proved that
 there exist a constant $C=C(m,n)$ such that for any $u\in W_0^{m,\frac{n}{m}}(\Omega)$ with $\|\nabla^mu\|_\frac{n}{m}\leq1$, then
\begin{equation}\label{1.2}
\frac{1}{|\Omega|}\int_{\Omega}\exp(\beta|u(x)|^\frac{n}{n-m})dx\leq C,
\end{equation}
for all $\beta\leq\beta(n,m)$, where
\begin{equation*}
\beta(n,m)=
\begin{cases}
\frac{n}{\omega_{n-1}}[\frac{\pi^\frac{n}{2}2^m\Gamma(\frac{m+1}{2})}{\Gamma(\frac{n-m+1}{2})}]^\frac{n}{n-m},\text{ when }m\text{ is odd},\\
\frac{n}{\omega_{n-1}}[\frac{\pi^\frac{n}{2}2^m\Gamma(\frac{m}{2})}{\Gamma(\frac{n-m}{2})}]^\frac{n}{n-m},\text{ when }m\text{ is even}.
\end{cases}
\end{equation*}
Moreover, for any $\beta>\beta(n,m)$, the integral in (\ref{1.2}) can be made as large as possible.

Ruf and Sani \cite{RS}
extended the Adams' inequality (\ref{1.2}) to Sobolev space $W^{m, \frac{n}{m}}(\R^n)$ for any even number $m<n$
by the rearrangement argument.
Lu and Yang \cite{LY} established Adams' inequalities for bi-Laplacian, which is a generalization of
classical Adams' inequality in dimension 4, and showed the existence of extremal functions by the blow-up analysis method.

In 2015, Lu, Tang and Zhu \cite{LTZ} proved Adams' type inequality with exact growth in $\mathbb{R}^{n}$:
\begin{equation}\label{1.3.0}
\int_{\mathbb{R}^n}\frac{\Phi(\beta_n|u|^\frac{n}{n-2})}{(1+|u|)^\frac{n}{n-2}}dx\leq C\|u\|_\frac{n}{2}^\frac{n}{2},
\text{ for all } u\in W^{2,\frac{n}{2}}(\mathbb{R}^n) \text{ and }
\|\Delta u\|_\frac{n}{2}\leq1,
\end{equation}
where $$\Phi(t)=e^t-\sum\limits_{j=0}^{j_\frac{n}{2}-2}\frac{t^j}{j!}, \quad j_\frac{n}{2}=\min\{j\in\mathbb{N}:j\geq\frac{n}{2}\}\geq\frac{n}{2},$$
and $\beta_n=\beta(n,2)=\frac{n}{\omega_{n-1}}[\frac{4\pi^\frac{n}{2}}{\Gamma(\frac{n}{2}-1)}]^\frac{n}{n-2}$.

Lam and Lu \cite{LL4} proved the singular Adams' type inequality with exact growth in $\mathbb{R}^{4}$:
\begin{equation*}
\int_{\mathbb{R}^4}\frac{e^{\alpha u^2}-1}{(1+|u|^{2-\frac{\beta}{2}})|x|^\beta}dx\leq C\|u\|_2^{2-\frac{\beta}{2}},
\text{ for all }u\in W^{2,2}(\mathbb{R}^4), \|\Delta u\|_2\leq1,
\end{equation*}
where $0\leq\beta<4$ and $0<\alpha\leq32\pi^2(1-\frac{\beta}{4})$.

For more details about Adams' type inequalities, we refer to \cite{MS} \cite{CRT} \cite{TC} and the references therein.

Inspired by the previous work on Moser-Trudinger inequality, a lot of researchers devoted themselves to the topic on Moser-Trudinger inequality involving anisotropic norm.
In this paper, we use $F\in C^2(\R^n\backslash \{0\})$
to represent a positive, convex and homogeneous function with its polar $F^o(x)$ standing for a finsler metric on $\R^n$.
The operator $\Delta_F$ is called
Finsler-Laplacian operator, which is defined as
$$
\Delta_F u:=\sum_{i=1}^{n}\frac{\partial}{\partial x_i}(F(\nabla u)F_{\xi_i}(\nabla u)),
$$
where $F_{\xi_i}=\frac{\partial F}{\partial \xi_i}$. In its isotropic case, i.e. $F(\xi)=|\xi|$, $\Delta_F$ is nothing but the ordinary Laplacian.
This operator is closed related to a smooth, convex hypersurface in $\R^n$, which is called
the Wulff shape and was initiated in Wulff' work \cite{W}.
More details about $F(x)$ and $F^o(x)$
will be seen in Section 2.

For a bounded smooth domain $\Omega\subset \R^n$,
by replacing the isotropic Dirichlet norm $\| u \|_{W_0^{1,n}(\Omega)}=(\int_{\Omega}|\nabla u|^ndx)^{\frac{1}{n}}$ by anisotropic norm $(\int_{\Omega}F^n(\nabla u)dx)^{\frac{1}{n}}$ in $W_0^{1,n}(\Omega)$,
Wang and Xia \cite{WX} employed the rearrangement method to prove the following result:
$$
\sup\limits_{u\in W_0^{1,n}(\Omega), \int_{\Omega}F^n(\nabla u)dx\leq 1}\int_{\Omega}e^{\sigma|u|^{\frac{n}{n-1}}}dx\leq C,
$$
where $\sigma\leq \sigma_n=n^{\frac{n}{n-1}}\kappa_n^{\frac{1}{n-1}}$ and
\begin{equation}\label{1.200}
\kappa_n=|{\{x\in \R^n|F^o(x)\leq 1 \}}|
\end{equation}
is  the volume of unit
Wulff ball in $\R^n$. Moreover the constant $\sigma_n$ is sharp.

Liu \cite{Liu} extended Wang and Xia's result to the entire space $\R^n$. He proved the anisotropic Moser-Trudinger type inequality associated with
exact growth in $\R^n$:
$$
\int_{\R^n}\frac{   \phi(\sigma |u|^{\frac{n}{n-1}})   }{   1+|u|^{\frac{n}{n-1}}   }dx\leq C\| u \|_{n}^n, \forall u\in W^{1, n}(\R^n), \text{ with } \int_{\R^n}F^n(\nabla u)dx\leq 1,
$$
where $\phi(t)=e^t-\sum_{j=0}^{n-2}\frac{t^j}{j!}$, $\forall \sigma\leq \sigma_n=n^{\frac{n}{n-1}}\kappa_n^{\frac{1}{n-1}}$.

Recently, Lu, Shen, Xue and Zhu \cite{LSXZ} have established a class of singular anisotropic Moser-Trudinger inequalities by introducing a subtle version of variable transformation. What's more, they also got the existence of extremals for this type of inequality by using a delicate process of blow up analysis.

Although there are some results on anisotropic Moser-Trudinger inequalities, as far as we know, there are fewer results on anisotropic Adams' type inequalities. In \cite{ZYCZ}, Zhang, Yang, Cheng and Zhou
proved the following anisotropic Adams' type inequality with exact growth in $\R^4$:
for any
$u\in W^{2, 2}(\R^4)$ with $\| \Delta_F  u \|_2^2=\int_{\R^{4}}(\Delta_F  u)^2dx\leq 1$,
there exists a constant $C>0$ such that
\begin{equation}\label{1.100}
 \int_{\R^4} \frac{e^{64\kappa_4 u^2}-1}{(1+|u|)^2}dx\leq C \| u\|_2^2,
\end{equation}
and this constant $64\kappa_4$ is sharp. Recalling (\ref{1.200}),
$\kappa_4$ denote the volume of unit
Wulff ball in $\R^4$.
Using the same trick, we also proved
for a bounded domain $\Omega\subset \R^4$, any $u\in W_0^{2, 2}(\Omega)$ with $\| \Delta_F  u \|_2^2=\int_{\Omega}(\Delta_F  u)^2dx\leq 1$,
there exists a constant $C>0$ such that
\begin{equation}\label{1.102}
\int_{\Omega}e^{64\kappa_4 u^2}dx\leq C,
\end{equation}
and this constant $64\kappa_4$ is sharp.

In this paper, we extend the inequalities (\ref{1.100}) and (\ref{1.102}) to its singular version in $\R^{n}$.
As far as we know, this is the first result involving anisotropic Adams' inequality in $\R^{n}$.
The rearrangement technique under finsler metric is key tool in our proof.
As the quasilinear structure of Finsler-Laplacian, the research is more challenging.
Besides, as in the isotropic case, it is an interesting problem involving the existence of extremal function
for anisotropic Adams' inequality. By direct calculation, extremal function of anisotropic Adams' inequality
in Theorem $\ref{main-thm-2}$ satisfy a new and complicated partial differential equations, which
may be studied in future.

 We note that when we consider anisotropic Adams' type inequality in bounded
domain $\Omega\subset \R^{n}$, we denote
$$\| \Delta_F  u \|_\frac{n}{2}=(\int_{\Omega}(\Delta_F  u)^{\frac{n}{2}}dx)^{\frac{2}{n}},$$
and when we consider anisotropic Adams' type inequality in $\R^{n}$, we denote
$$\| \Delta_F  u \|_\frac{n}{2}=(\int_{\R^{n}}(\Delta_F  u)^{\frac{n}{2}}dx)^{\frac{2}{n}}.$$

First, we have the following singular anisotropic Adams' type inequality with exact growth in $\R^n$.

\begin{thm}\label{main-thm-1}
Let $n\geq3$, $0\leq\beta<n$ and $0<\lambda\leq\lambda_n(1-\frac{\beta}{n})$, where
$\lambda_n=n^\frac{n}{n-2}(n-2)^\frac{n}{n-2}\kappa_{n}^\frac{2}{n-2}$.
 Then there exists a constant $C=C(\lambda,\beta,n)>0$ such that for all $u\in W^{2,\frac{n}{2}}(\mathbb{R}^n)$
 with $\|\Delta_F u\|_\frac{n}{2}\leq1$, there holds
\begin{equation*}
\int_{\mathbb{R}^n}\frac{\Phi(\lambda |u|^\frac{n}{n-2})}{(1+|u|^{\frac{n}{n-2}(1-\frac{\beta}{n})})F^o(x)^\beta}dx\leq C\|u\|_\frac{n}{2}^{\frac{n}{2}(1-\frac{\beta}{n})},
\end{equation*}
where $\Phi(t)=e^t-\sum\limits_{j=0}^{j_\frac{n}{2}-2}\frac{t^j}{j!}$, $j_\frac{n}{2}=\min\{j\in\mathbb{\N}:j\geq\frac{n}{2}\}\geq\frac{n}{2}$.
Moreover, the constant $\lambda_n(1-\frac{\beta}{n})$ is sharp and
 the power $\frac{n}{n-2}(1-\frac{\beta}{n})$ in the denominator cannot be replaced with any $q<\frac{n}{n-2}(1-\frac{\beta}{n})$.
\end{thm}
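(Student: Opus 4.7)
The plan is to reduce the singular anisotropic problem in $\R^n$ to a weighted one-dimensional Adams-type inequality with exact growth by means of an anisotropic (convex) symmetrization, and then mimic the strategy behind the Lu-Tang-Zhu estimate (\ref{1.3.0}). Because the weight $F^o(x)^{-\beta}$ is already Wulff-radial and decreasing, the symmetrized problem retains the same singular factor after the change of variables, and the matching between the critical constant $\lambda_n(1-\beta/n)$ and the denominator exponent $\tfrac{n}{n-2}(1-\tfrac{\beta}{n})$ will fall out of this reduction.

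First I would introduce the convex rearrangement $u^\star(x) = v(F^o(x))$, where $v$ is the decreasing rearrangement of $u$ with respect to the Finsler metric. Two ingredients are required. The Hardy-Littlewood type inequality
$$\int_{\R^n} F^o(x)^{-\beta}\, G(|u|)\, dx \;\leq\; \int_{\R^n} F^o(x)^{-\beta}\, G(u^\star)\, dx$$
for any nondecreasing $G$ is immediate since $F^o(x)^{-\beta}$ is itself Wulff-radially decreasing. The anisotropic Talenti-type comparison $\|\Delta_F u^\star\|_{n/2} \leq \|\Delta_F u\|_{n/2}$ is the key analytic step; I would obtain it by representing $u^\star$ via the Green's function of $\Delta_F$ on Wulff balls together with an O'Neil-type estimate for the resulting convolution, exactly as in the bi-Laplacian Talenti principle but adapted to the Finsler setting. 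Combined with $\|u^\star\|_{n/2}=\|u\|_{n/2}$, this reduces the theorem to Wulff-radial functions.

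Second, using polar-Wulff coordinates and $|\{F^o\leq r\}| = \kappa_n r^n$, the estimate becomes the weighted 1D inequality
$$n\kappa_n \int_0^\infty \frac{\Phi(\lambda v(r)^{n/(n-2)})}{1+v(r)^{(n/(n-2))(1-\beta/n)}}\, r^{n-1-\beta}\, dr \;\leq\; C\Bigl(n\kappa_n\int_0^\infty v(r)^{n/2}\, r^{n-1}\, dr\Bigr)^{1-\beta/n},$$
subject to a 1D version of $\|\Delta_F v\|_{n/2}\leq 1$. The weight $r^{n-1-\beta}$ pairs exactly with the denominator exponent $\tfrac{n}{n-2}(1-\tfrac{\beta}{n})$, which pins down the critical scaling. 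I would then follow the Lu-Tang-Zhu dichotomy: on the region $\{v \leq M\}$ the inequality follows by Taylor-expanding $\Phi$ and invoking Sobolev-type bounds for $W^{2,n/2}$; on $\{v>M\}$ I would apply a subcritical-to-critical equivalence in the spirit of Lam-Lu-Zhang \cite{LLZ} together with the subcritical singular anisotropic Adams inequality on the bounded Wulff ball proved later in the paper (the second main theorem of the introduction). Sharpness of both $\lambda_n(1-\beta/n)$ and the denominator exponent is then established via an anisotropic Moser-Adams test sequence $u_k$ supported in a Wulff ball, constant of order $(\log k)^{(n-2)/n}$ on $\{F^o(x)\leq R/k\}$ and interpolating logarithmically with $F^o(x)$ on the annular region; direct computation shows that the LHS diverges while the RHS stays bounded for any $\lambda>\lambda_n(1-\beta/n)$ or $q<\tfrac{n}{n-2}(1-\tfrac{\beta}{n})$.

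The hardest step is the second-order Talenti-type comparison for $\Delta_F$: the standard Pólya-Szegő inequality is first order and does not control $\|\Delta_F u\|_{n/2}$, so one must build the Finsler Green's function on Wulff balls and prove the O'Neil-type rearrangement inequality by hand, paying attention to the quasilinear nature of $\Delta_F$. A secondary but subtle difficulty is to propagate the singular weight $F^o(x)^{-\beta}$ through the 1D reduction without destroying the exact-growth balance; it is precisely this bookkeeping that forces the denominator exponent $\tfrac{n}{n-2}(1-\tfrac{\beta}{n})$ and hence the sharp threshold $\lambda_n(1-\tfrac{\beta}{n})$.
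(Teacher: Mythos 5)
Your outline captures the right high-level shape (anisotropic rearrangement, reduction to Wulff-radial functions, a Lu--Tang--Zhu-style split, Moser-type test functions for sharpness), but the key analytic step is misstated in a way that would sink the argument, and the treatment of the exact-growth regime is both vague and logically circular.

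The comparison you propose to prove, $\|\Delta_F u^\star\|_{n/2}\leq\|\Delta_F u\|_{n/2}$ where $u^\star$ is the convex symmetrization of $u$, is \emph{false} in general, already in the Euclidean case: unlike the first-order P\'olya--Szeg\H{o} inequality, Schwarz (or convex) symmetrization does not decrease the $L^p$ norm of the Laplacian. This is precisely why Adams' original argument, and all of its descendants including this paper, do not symmetrize $u$ directly. The correct statement (Lemma~\ref{lm3.1} in the paper, proved in \cite{ZYCZ}) is a \emph{pointwise Talenti-type comparison on the decreasing rearrangement}: writing $f=-\Delta_F u$, one has $u^*(t_1)-u^*(t_2)\leq(n\kappa_n^{1/n})^{-2}\int_{t_1}^{t_2}f^{**}(\xi)\xi^{2/n-1}d\xi$. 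That is, one controls $u^*$ through the maximal function $f^{**}$ of the symmetrized right-hand side, not through $\Delta_F$ of the symmetrized solution. Your Green's-function/O'Neil instinct points toward this lemma, but the conclusion you wrote down is a different and untrue inequality; if you tried to prove it, you would get stuck. Combined with Hardy's inequality (Lemma~\ref{lm2.2}), the correct lemma converts $\|\Delta_F u\|_{n/2}\leq1$ into the constraint $\int_0^\infty[f^{**}]^{n/2}\leq(n/(n-2))^{n/2}$, which is what feeds the rest of the estimate.

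The second gap is in your treatment of the core region. You propose to invoke a ``subcritical-to-critical equivalence in the spirit of Lam--Lu--Zhang together with the subcritical singular anisotropic Adams inequality on the bounded Wulff ball proved later in the paper.'' This does not work: Theorem~\ref{main-thm-2} is the critical bounded-domain inequality and is established after, and by the same machinery as, Theorem~\ref{main-thm-1}, so relying on it would be circular. More importantly, the exact-growth inequality in $\R^n$ is \emph{not} a consequence of a bounded-domain Adams inequality: the whole point is to control the contribution near the level set where $u^\sharp$ is large without any uniform bound on the support. The paper does this by a second split at a radius $R_1$ where $\int_0^{|\W_{R_1}|}[f^{**}]^{n/2}=\alpha(1-\varepsilon_0)$, applying the Lam--Lu integral lemma (Lemma~\ref{lm4.1}) on $\W_{R_1}$, and then controlling the boundary term via the optimal descending growth estimate of Lemma~\ref{lm3.2}, which is the exponential radial Sobolev inequality coming from the sequence Lemma~\ref{lm3.3}. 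None of this is recoverable from a bounded-domain statement plus an equivalence principle; it is the quantitative heart of the exact-growth proof, and your outline leaves it essentially unaddressed.
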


Next using the similar trick, we obtain the singular anisotropic Adams' type inequality in a bounded domain $\Omega\subset \R^{n}$.
\begin{thm}\label{main-thm-2}
Let $n\geq3$, $0\leq\beta<n$ and $0<\lambda\leq\lambda_n(1-\frac{\beta}{n})$, where
$\lambda_n=n^\frac{n}{n-2}(n-2)^\frac{n}{n-2}\kappa_{n}^\frac{2}{n-2}$.
Let $\Omega\subset\mathbb{R}^n$ be a bounded domain. Then there exists a constant $C>0$ such that
\begin{equation*}
\sup\limits_{u\in W^{2,\frac{n}{2}}_0(\Omega),\|\Delta_Fu\|_\frac{n}{2}\leq1}
\int_{\Omega}       \frac{ e^{\lambda |u|^\frac{n}{n-2}} }{ F^o(x)^\beta }      dx\leq C.
\end{equation*}
Moreover, the constant $\lambda_n(1-\frac{\beta}{n})$ is sharp.
\end{thm}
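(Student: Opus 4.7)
The approach is to reduce the inequality to the radial, one-dimensional setting via anisotropic convex rearrangement and then apply a classical Adams--Moser estimate. First, for $u \in W_0^{2, n/2}(\Omega)$ with $\|\Delta_F u\|_{n/2} \leq 1$, let $\Omega^\star = \{x : F^o(x) < R\}$ denote the Wulff ball of the same Lebesgue measure as $\Omega$, so $R = (|\Omega|/\kappa_n)^{1/n}$. I would introduce the Wulff-radial, nonnegative, nonincreasing (in $F^o$) function $v \in W_0^{2, n/2}(\Omega^\star)$ produced by Talenti-type comparison for the Finsler Laplacian: following the scheme developed for the $4$-dimensional case in the companion work \cite{ZYCZ} and the general Finsler Talenti theory, $v$ can be taken to satisfy $u^\sharp \leq v$ pointwise on $\Omega^\star$ together with $\|\Delta_F v\|_{n/2} \leq \|\Delta_F u\|_{n/2} \leq 1$, where $u^\sharp$ is the anisotropic rearrangement of $|u|$ with respect to $F^o$. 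Since $F^o(x)^{-\beta}$ is itself radially decreasing in $F^o$, the Hardy--Littlewood rearrangement inequality yields
\[
\int_\Omega \frac{e^{\lambda |u|^{n/(n-2)}}}{F^o(x)^\beta}\,dx \leq \int_{\Omega^\star} \frac{e^{\lambda v^{n/(n-2)}}}{F^o(x)^\beta}\,dx,
\]
so it suffices to treat $u$ that is nonnegative and Wulff-radial nonincreasing on $\Omega^\star$.

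Second, for such $u(x) = g(F^o(x))$ the radial identity $\Delta_F u = g''(r) + (n-1)g'(r)/r$ (see, e.g., \cite{WX}) shows that the Finsler Laplacian coincides with the Euclidean radial Laplacian. Combined with $dx = n\kappa_n r^{n-1}\,dr$ for radial integration against the Wulff gauge, the norm control becomes $n\kappa_n \int_0^R |g''(r) + \tfrac{n-1}{r}g'(r)|^{n/2} r^{n-1}\,dr \leq 1$ and the target integral reduces to $n\kappa_n \int_0^R e^{\lambda g(r)^{n/(n-2)}} r^{n-1-\beta}\,dr$. From here I would proceed exactly as in the proofs of the isotropic singular Adams inequality by Lam--Lu \cite{LL4} and Lu--Tang--Zhu \cite{LTZ}: invert the radial Laplacian via its explicit Green function on $(0, R)$, apply O'Neil's lemma to the resulting Riesz representation, and after the change of variable $t = n\log(R/r)$ obtain the pointwise estimate $g(r)^{n/(n-2)} \leq t/\lambda_n + O(1)$. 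The integrand is then bounded by $e^{(\lambda/\lambda_n - (1-\beta/n))t}$, which integrates to a constant depending only on $n,\beta,\lambda$ precisely when $\lambda \leq \lambda_n(1-\beta/n)$; the constant $\lambda_n = n^{n/(n-2)}(n-2)^{n/(n-2)}\kappa_n^{2/(n-2)}$ emerges from the normalization of the radial Green kernel together with the Wulff volume $\kappa_n$.

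For sharpness I would test against the standard anisotropic Adams--Moser sequence $u_k(x) = g_k(F^o(x))$ on $\Omega^\star$, where $g_k$ is a smooth interpolation plateauing at the value $(k/\lambda_n)^{(n-2)/n}$ on the inner Wulff ball $\{F^o(x) \leq R e^{-k/n}\}$, decaying via the Moser logarithmic profile in the annulus, and normalized so that $\|\Delta_F u_k\|_{n/2} = 1 + o(1)$. Direct calculation (cf.\ \cite{LL4, LTZ}) then yields
\[
\int_{F^o(x) \leq R e^{-k/n}} \frac{e^{\lambda u_k^{n/(n-2)}}}{F^o(x)^\beta}\,dx \geq c\,R^{n-\beta}\exp\bigl\{k[\lambda/\lambda_n - (1-\beta/n)]\bigr\},
\]
which diverges as $k \to \infty$ whenever $\lambda > \lambda_n(1-\beta/n)$.

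The main obstacle is the Talenti comparison of the first step at the $\Delta_F$-level: because $\Delta_F$ is quasilinear and the concentric level sets of the rearranged function are Wulff balls rather than Euclidean balls, the classical Schwarz-symmetrization proof requires careful adaptation, typically through a two-stage argument that first passes to the anisotropic decreasing rearrangement of $|\Delta_F u|$ and then solves the radial Finsler equation on $\Omega^\star$, with an anisotropic coarea formula used to control the level-set areas and thereby the $L^{n/2}$-norm. Once this is secured, the remaining one-dimensional Adams--Moser calculus is routine.
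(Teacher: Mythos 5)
Your overall strategy — anisotropic rearrangement, a Talenti-type pointwise comparison for the Finsler Laplacian, reduction to a one-dimensional Moser--Adams integral after the logarithmic change of variable, and sharpness via concentrating Moser-type test functions — coincides with the paper's. Your identification of the comparison principle as the delicate structural step is also right; it is exactly Lemma~3.1 of the paper (quoted from \cite{ZYCZ}), proved precisely along the lines you sketch (anisotropic rearrangement of $|\Delta_F u|$, radial comparison solution, anisotropic coarea formula), and it is equivalent to the statement you need after noting that $\int_0^\rho f^*(\sigma)\,d\sigma = \rho f^{**}(\rho)$.

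The genuine gap is in the last calculus step. You claim that after inverting the radial operator one obtains a uniform pointwise bound $g(r)^{n/(n-2)} \leq t/\lambda_n + O(1)$, so that the integrand is bounded by $e^{(\lambda/\lambda_n - (1-\beta/n))t}$ and "integrates to a constant precisely when $\lambda \leq \lambda_n(1-\beta/n)$." This is false at the endpoint: when $\lambda = \lambda_n(1-\beta/n)$ the exponent is zero and $\int_0^\infty e^{0\cdot t}\,dt$ diverges. A uniform pointwise estimate followed by exponential integration only gives the strictly subcritical range $\lambda < \lambda_n(1-\beta/n)$. The endpoint is exactly what makes this an Adams-type result, and it requires the Adams--Garsia functional lemma (Lemma~4.1 in the paper, taken from Lam--Lu \cite{LL5}): one must bound $\int_0^\infty e^{-F_\gamma(t)}\,dt$ where $F_\gamma(t) = \gamma t - \gamma\bigl(\int a(s,t)\phi(s)\,ds\bigr)^{p'}$ with $\int \phi^p \leq 1$. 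The content of that lemma is not that $F_\gamma$ is bounded below by a positive constant pointwise (it is not), but that the measure of the set where $F_\gamma$ is small is uniformly controlled by the $L^p$ constraint on $\phi$. You need to either invoke this lemma directly after setting $\phi(s) = \frac{n-2}{n}|\Omega|^{2/n}f^{**}(|\Omega|e^{-s})e^{-2s/n}$ (as the paper does), or reproduce the underlying level-set argument; the pointwise route as written cannot reach $\lambda = \lambda_n(1-\beta/n)$. Your sharpness computation is essentially correct, though one should take a $C^2$ cap (e.g.\ quadratic in $F^o$, as in the paper's $u_\epsilon$) rather than a flat plateau so that $\Delta_F u_k$ stays controlled across the matching radius.
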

The paper is organized as follows. In Section 2,
we introduce some basic properties of Finsler-Laplacian and rearrangement of functions. In Section 3, we prove some key inequalities which will be useful to the proof of Theorem $\ref{main-thm-1}$. In Section 4, we will prove Theorem $\ref{main-thm-1}$. Finally, Section 5 is devoted to the proof of Theorem $\ref{main-thm-2}$.

\noindent {\bf  Acknowledgements:}
The First author and the fourth author are supported by NSFC of China (No. 12571223).
The authors sincerely appreciate the editors and referees for their careful reading
and helpful comments to improve this paper.
\section{Preliminary}\label{sec2}
In this section, we give some preliminaries on Finsler-Laplacian and rearrangement.

 Throughout the paper, let $F:\mathbb{R}^n\rightarrow\mathbb{R}$ be a convex function of class $C^2(\mathbb{R}^n\setminus\{0\})$
 which is even and positively homogenous degree 1, i.e.
\begin{equation*}
F(t\xi)=|t|F(\xi), \text{ for any } t\in\mathbb{R}, \xi\in\mathbb{R}^n.
\end{equation*}
A typical example is $F(\xi)=(\sum_i|\xi_i|^q)^\frac{1}{q}$ for $q\in[1,\infty)$. We further assume that
\begin{equation*}
F(\xi)>0,\text{ for any }\xi\neq0.
\end{equation*}
By the homogeneity of $F$, there exists two constants $0<a\leq b<\infty$ such that
\begin{equation*}
a|\xi|\leq F(\xi)\leq b|\xi|,\quad \forall \xi\in\mathbb{R}^n.
\end{equation*}
Consider the following minimizing problem
\begin{equation*}
\min\limits_{u\in W_0^{1,n}(\Omega)}\int_{\Omega}F^n(\nabla u)dx,
\end{equation*}
whose Euler-Lagrange equation contains the Finsler-Laplacian operator
\begin{equation*}
\Delta_F u:=\sum\limits^{n}_{i=1}\frac{\partial}{\partial x}(F(\nabla u)F_{\xi_i}(\nabla u)).
\end{equation*}
This operator has been studied by many mathematicians, see \cite{AFTL, BFK, FM, FK, WX} and the references therein.

Consider the map
\begin{equation*}
\phi:\S^{n-1}\rightarrow\mathbb{R}^n, \phi(\xi)=F_\xi(\xi).
\end{equation*}
Its image $\phi(\S^{n-1})$ is a smooth, convex hypersurface in $\mathbb{R}^n$,
which
is called Wulff shape of $F$. If one defines the support function of $F$ as $F^o(x):= \sup\limits_{\xi\in K}\langle x,\xi\rangle$, where $K:=\{x\in\mathbb{R}^n:F(x)\leq1\}$, it is easy to verify that $F^o:\mathbb{R}^n\longmapsto[0,+\infty)$ is also a convex, homogeneous function of class of $C^2(\mathbb{R}^n\setminus\{0\})$. And $F^o$ is dual to $F$ in the sense that
\begin{equation*}
F^o(x)=\sup\limits_{\xi\neq0}\frac{\langle x,\xi\rangle}{F(\xi)},\quad F(x)=\sup\limits_{\xi\neq0}\frac{\langle x, \xi\rangle}{F^o(\xi)}.
\end{equation*}
We denote unit Wulff ball
\begin{equation*}
\mathcal{W}_1:=\{x\in\mathbb{R}^n|F^o(x)\leq1\},
\end{equation*}
and
\begin{equation*}
\kappa_n:=|\mathcal{W}_1|,
\end{equation*}
which is the Lebesgue measure of $\mathcal{W}_1$.
And we denote by $\mathcal{W}_r(x_0)$ the Wulff ball of center at $x_0$ with radius $r$, i.e.,
\begin{equation*}
\mathcal{W}_r(x_0):=\{x\in\mathbb{R}^n|F^o(x-x_0)\leq r\}.
\end{equation*}
For convenience, we denote by $\mathcal{W}_r$ the Wulff ball of center at $0$ with radius $r$.

For later use, we summarize some properties of the function $F$,  which follows directly from the assumption on  $F$,
see \cite{BP, FK, WX2, ZZ}.

\begin{lm}\label{lm-2.1}
We have
\begin{enumerate}
\item[(i)]$|F(x)-F(y)|\leq F(x+y)\leq F(x)+F(y)$;
\item[(ii)]$ \frac{1}{C}\leq|\nabla F(x)|\leq C $, and $ \frac{1}{C}\leq|\nabla F^{o}(x)|\leq C $  for some $C>0$ and any $x\neq 0$;
\item[(iii)]$\langle x,\nabla F(x)\rangle=F(x),\langle x,\nabla F^{o}(x)\rangle=F^{o}(x)$ for any $x\neq 0$;
\item[(iv)]$F(\nabla F^{o}(x))=1$, $F^{o}(\nabla F(x))=1$ for any $x\neq 0$;
\item[(v)]$F^{o}(x) F_{\xi}(\nabla F^{o}(x))=x $ for any $ x\neq 0$;
\item[(vi)]$F_\xi(t\xi)= \text{sgn}(t)F_{\xi}(\xi)$ for any $\xi\neq 0$ and $t\neq 0$.
\end{enumerate}
\end{lm}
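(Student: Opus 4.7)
The plan is to verify the six properties one at a time, treating them as elementary consequences of the three standing hypotheses: $F\in C^2(\R^n\setminus\{0\})$, $F$ is convex and even, and $F$ is positively $1$-homogeneous with $F(\xi)>0$ for $\xi\neq 0$. Properties (i) and (ii) are purely norm-theoretic, (iii) and (vi) follow from Euler-type differentiation of the homogeneity relation, and (iv) and (v) require the duality identity between $F$ and $F^o$.

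For (i), I would first derive the subadditivity $F(x+y)\le F(x)+F(y)$ by combining $1$-homogeneity with convexity: writing $x+y = 2\bigl(\tfrac12 x + \tfrac12 y\bigr)$, convexity gives $F\bigl(\tfrac12 x + \tfrac12 y\bigr)\le \tfrac12 F(x)+\tfrac12 F(y)$, and $1$-homogeneity pulls out the factor $2$. The reverse inequality $|F(x)-F(y)|\le F(x+y)$ then follows in the standard way by applying subadditivity to $F(x)=F((x+y)+(-y))\le F(x+y)+F(-y)=F(x+y)+F(y)$, using that $F$ is even. For (ii), I would observe that $F$ being $1$-homogeneous and $C^2$ away from the origin forces $\nabla F$ and $\nabla F^o$ to be $0$-homogeneous, so each gradient is constant along rays and hence determined by its restriction to $\mathbb{S}^{n-1}$. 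Continuity on a compact set plus the non-vanishing guaranteed by $F,F^o>0$ and the identities in (iii) yield uniform upper and lower bounds.

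For (iii), I would differentiate $F(t\xi)=tF(\xi)$ (for $t>0$) in $t$ at $t=1$; the chain rule gives $\langle \xi,\nabla F(\xi)\rangle=F(\xi)$, which is Euler's formula for $1$-homogeneous functions. The same argument applied to $F^o$ yields the second equality. For (vi), I would differentiate $F(t\xi)=|t|F(\xi)$ in $\xi_i$: the left-hand side gives $tF_{\xi_i}(t\xi)$ and the right-hand side gives $|t|F_{\xi_i}(\xi)$, whence $F_{\xi_i}(t\xi)=(|t|/t)F_{\xi_i}(\xi)=\mathrm{sgn}(t)F_{\xi_i}(\xi)$ for $t\neq 0$.

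The main subtlety is in (iv) and (v), where the duality between $F$ and $F^o$ enters nontrivially. I would start from the definition $F^o(x)=\sup_{\xi\neq 0}\langle x,\xi\rangle/F(\xi)$. For a fixed $x\neq 0$, smoothness and strict convexity (ensured via $F\in C^2$, $F>0$, and the reciprocal relation $F(x)=\sup_{\xi\neq 0}\langle x,\xi\rangle/F^o(\xi)$) guarantee a unique maximizer on the level set $\{F(\xi)=1\}$. Writing the Lagrange multiplier equation for this maximizer yields $x=F^o(x)\nabla F(\xi^*)/|\nabla F(\xi^*)|\cdot(\text{normalization})$; cleaning up with (iii) and the identity $\langle x,\nabla F^o(x)\rangle=F^o(x)$ identifies the maximizer as $\xi^*=\nabla F^o(x)$. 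Plugging back in gives $F(\nabla F^o(x))=1$, which is (iv); the symmetric argument yields $F^o(\nabla F(x))=1$. For (v), I would combine (iii) applied to $F^o$ with the maximizer identification above: the equality case in $\langle x,\xi\rangle\le F^o(x)F(\xi)$ at $\xi=\nabla F^o(x)$, together with $F(\nabla F^o(x))=1$ and an application of $F_\xi$ using (vi), yields $F^o(x)F_\xi(\nabla F^o(x))=x$. The anticipated obstacle is justifying the smoothness and uniqueness needed to pass from the supremum definition of $F^o$ to the pointwise duality formulas in (iv) and (v); this is where the $C^2$ hypothesis and strict positivity of $F$ do the real work, and where I would, if pressed for rigor, cite the detailed treatments in \cite{BP, FK, WX2, ZZ}.
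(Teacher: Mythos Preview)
Your proposal is correct and in fact goes well beyond what the paper does: the paper offers no proof of this lemma at all, merely stating that the properties ``follow directly from the assumption on $F$'' and citing \cite{BP, FK, WX2, ZZ}. Your item-by-item verification --- subadditivity from convexity plus homogeneity for (i), $0$-homogeneity of the gradient for (ii), Euler's relation for (iii) and (vi), and the Lagrange-multiplier/duality argument for (iv) and (v) --- is the standard route and is exactly what those references contain, so there is nothing to compare.
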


Next we give the isoperimetric inequality and co-area formula with respect to $F$.
For a domain  $\Omega\subset \R^n$,  a subset  $E\subset \Omega$  and a function of bounded variation  $u\in BV(\Omega)$, denote the anisotropic bounded variation of $u$ with respect to $F$ by
$$\int_{\Omega}|\nabla u|_{ F}=\sup\{\int_{\Omega}u \text{ div}\sigma dx,  \sigma\in C_{0}^{1}(\Omega; \R^n), F^{o}(\sigma)\leq 1\},$$
and anisotropic perimeter of  $E$  with respect to   $F$ by
$$P_{F}(E):=\int_{\Omega}|\nabla \mathcal{X}_{E}|_{F}dx,$$
where  $\mathcal{X}_{E}$  is the characteristic function of the set  $E$.  It is well known (see also \cite {FM}) that the co-area formula
\begin{equation}\label{2.1}
\int_{\Omega}|\nabla u|_{ F}=\int_{0}^{\infty}P_{F}(|u|>t)dt
\end{equation}
and the isoperimetric inequality
\begin{equation}\label{2.2}
P_{F}(E)\geq n\kappa_{n}^{\frac{1}{n}}|E|^{1-\frac{1}{n}}
\end{equation}
hold. Moreover, the equality in (\ref{2.2}) holds if and only if  $E$  is a Wulff ball.

We denote by $\Omega^\sharp$ the homothetic Wulff ball centered at the origin in $\mathbb{R}^n$, such that
\begin{equation*}
|\Omega|=|\Omega^\sharp|,
\end{equation*}
where $|\cdot|$ indicates the $n$-dimensional Lebesgue measure.
Let $u:\Omega\rightarrow\mathbb{R}$ be a measurable function with real values on $\Omega$.
It is well known that the  distribution function $\mu_u(t):[0,+\infty)\rightarrow[0,+\infty]$
of $u$ is defined as
\begin{equation*}
\mu_u(t)=|x\in\Omega||u(x)|>t|,\text{ for }t\geq0.
\end{equation*}
The decreasing rearrangement $u^*$ of $u$ is defined as
\begin{equation*}
u^*(s):=\sup\{t\geq0|\mu_u(t)>s\},\text{ for }s\geq0,
\end{equation*}
which is the right-continuous, non-increasing function. The support of
$u^*$ satisfies $\text{ supp } u^*\subseteq[0,|\Omega|]$. Furthermore, the maximal function $u^{**}$ of the rearrangement $u^*$ is defined as
\begin{equation*}
u^{**}(s)=\frac{1}{s}\int_0^su^*(t)dt,\text{ for }s\geq0.
\end{equation*}
Since $u^*$ is non-increasing, $u^{**}$ is also non-increasing and $u^*\leq u^{**}$.

By the well known Hardy inequality(see \cite{BS}), we have
\begin{lm}\cite{BS}\label{lm2.2}
If $u\in L^p(\mathbb{R}^n)$ with $1<p<+\infty$ and $\frac{1}{p}+\frac{1}{p^\prime}=1$, then
\begin{equation*}
(\int_0^{+\infty}[u^{**}(s)]^pds)^\frac{1}{p}\leq p^\prime(\int_0^{+\infty}[u^*(s)]^pds)^\frac{1}{p}.
\end{equation*}
In particular, if $suppu\subseteq\Omega$ with $\Omega$ a domain in $\mathbb{R}^n$, then
\begin{equation*}
(\int_0^{|\Omega|}[u^{**}(s)]^pds)^\frac{1}{p}\leq p^\prime(\int_0^{|\Omega|}[u^*(s)]^pds)^\frac{1}{p}.
\end{equation*}
\end{lm}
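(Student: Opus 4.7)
The plan is to prove this classical Hardy inequality by expressing $u^{**}$ as a continuous average of dilations of $u^*$ and then applying Minkowski's integral inequality. The crucial identity comes from the substitution $t = rs$ in the defining integral:
\begin{equation*}
u^{**}(s) \;=\; \frac{1}{s}\int_0^s u^*(t)\,dt \;=\; \int_0^1 u^*(rs)\,dr, \qquad s > 0.
\end{equation*}
This displays $u^{**}$ as an integral, over $r\in[0,1]$, of the dilates $s\mapsto u^*(rs)$, a form tailor-made for Minkowski's integral inequality.

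Taking the $L^p((0,\infty),ds)$ norm of both sides and swapping the norm with the $r$-integral via Minkowski yields
\begin{equation*}
\left(\int_0^{+\infty} [u^{**}(s)]^p\,ds\right)^{1/p} \;\leq\; \int_0^1 \left(\int_0^{+\infty} [u^*(rs)]^p\,ds\right)^{1/p} dr.
\end{equation*}
A further change of variable $\tau = rs$ in the inner integral gives $\int_0^{+\infty}[u^*(rs)]^p\,ds = r^{-1}\int_0^{+\infty}[u^*(\tau)]^p\,d\tau$, so the right-hand side factors as $\left(\int_0^1 r^{-1/p}\,dr\right)\left(\int_0^{+\infty}[u^*(\tau)]^p\,d\tau\right)^{1/p}$. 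The remaining elementary integral evaluates to $\int_0^1 r^{-1/p}\,dr = p/(p-1) = p'$, producing exactly the constant claimed in the lemma. The "in particular" version for $\operatorname{supp} u \subseteq \Omega$ is then immediate, because $\operatorname{supp} u^* \subseteq [0,|\Omega|]$ forces both integrals over $[0,+\infty)$ to reduce to integrals over $[0,|\Omega|]$ without any change of constant.

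The only conceptual point requiring attention is the applicability of Minkowski's integral inequality, which needs joint measurability of $(r,s)\mapsto u^*(rs)$ and finiteness (up to the trivial case) of the right-hand side of the stated inequality; both are automatic, since $u^*$ is non-increasing and lies in $L^p$. In particular, no truncation or approximation argument is needed. Thus the proof is essentially a one-line application of Minkowski's integral inequality once the dilation identity $u^{**}(s)=\int_0^1 u^*(rs)\,dr$ is recognized, and I do not foresee any substantive obstacle.
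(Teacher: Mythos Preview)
Your proof is correct and is the standard dilation-plus-Minkowski argument for this classical Hardy inequality. The paper does not prove this lemma at all but merely cites it from \cite{BS}, so there is no approach to compare against.

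One small imprecision in your final sentence: when $\operatorname{supp} u \subseteq \Omega$, the rearrangement $u^*$ indeed vanishes on $(|\Omega|,\infty)$, but $u^{**}$ does \emph{not} (for $s>|\Omega|$ one has $u^{**}(s)=s^{-1}\int_0^{|\Omega|}u^*(t)\,dt$, which is typically nonzero). So the two integrals do not both ``reduce'' to $[0,|\Omega|]$. This does no harm to the argument, however, since you only need the upper bound $\int_0^{|\Omega|}[u^{**}]^p\,ds\le \int_0^{\infty}[u^{**}]^p\,ds$, and the right-hand side equals $\int_0^{|\Omega|}[u^*]^p\,ds$ exactly; the deduction from the full-line inequality therefore goes through unchanged.
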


Finally, the convex symmetrization $u^\sharp$ of $u$ with respect to $F$ is defined by:
\begin{equation*}
u^\sharp(x)=u^*(\kappa_nF^o(x)^n),\text{ for }x\in\Omega^\sharp.
\end{equation*}
In this paper, for convenience, $u^\sharp(x)$ is always written by $u^\sharp(r)$ when $r=F^o(x)$, i.e.
\begin{equation*}
u^\sharp(r)=u^*(\kappa_nr^n),\text{ for }r=F^o(x).
\end{equation*}
\section{Some crucial Lemmas}\label{sec3}
In this section, we give some crucial lemmas which play important roles in the proof of Theorem \ref{main-thm-1} and \ref{main-thm-2}.
First, we have the following lemma which have been proved in \cite{ZYCZ}.

Let $f\in L^2(\Omega)$. We consider the following Dirichlet problem:
\begin{equation}\label{3.4.0}
\left\{
\begin{array}{rlll}
-\Delta_F  u &= & f,\quad  &\text { in } ~~~\Omega,\\
u&=& 0, \quad &\text { on } ~~~\partial\Omega.\\
\end{array}
\right.
\end{equation}

\begin{lm}\cite{ZYCZ}\label{lm3.1}
Let $u\in W_{0}^{1,2}(\Omega)$ be the unique weak solution to (\ref{3.4.0}). Then
$$
u^*(t_1)-u^*(t_2)\leq \frac{1}{(n \kappa_n^{\frac{1}{n}})^2}\int_{t_1}^{t_2}\frac{f^{**}(\xi)}{\xi^{ 1-\frac{2}{n} }}d\xi,\text{ for a.e. } 0<t_1\leq t_2\leq |\Omega|.
$$
Equivalently, if $\Omega^\sharp=\W_R$, then
$$
u^\sharp(R_1)-u^\sharp(R_2)\leq \frac{1}{(n \kappa_n^{\frac{1}{n}})^2}\int_{|\W_{R_1}|}^{|\W_{R_2}|}\frac{f^{**}(\xi)}{\xi^{ 1-\frac{2}{n} }}d\xi,\text{ for a.e. } 0<R_1\leq R_2\leq R.
$$
\end{lm}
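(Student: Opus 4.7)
The plan is to adapt the classical Talenti comparison argument to the Finsler setting. Writing $v = |u|$ and $\mu(t) := |\{v > t\}|$ for its distribution function (so $u^*(s) = \inf\{t \geq 0 : \mu(t) \leq s\}$), the strategy is to produce a pointwise ODE bound
$$-\frac{du^*}{ds}(s) \leq \frac{f^{**}(s)}{(n\kappa_n^{1/n})^2 \, s^{1 - 2/n}}$$
and then integrate from $t_1$ to $t_2$ to obtain the stated inequality. The equivalent radial form follows immediately from the substitution $t_i = |\mathcal{W}_{R_i}| = \kappa_n R_i^n$ together with $u^\sharp(R_i) = u^*(\kappa_n R_i^n)$, so it suffices to prove the first form.

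To derive the core identity, I would test the weak equation $-\Delta_F u = f$ against the Lipschitz truncation $\phi_h := \mathrm{sgn}(u) \min\{(v - t)^+ / h, 1\}$. Since $\nabla \phi_h = \nabla u / h$ a.e.\ on $\{t < v < t + h\}$, the left-hand side becomes $h^{-1}\int_{\{t < v < t+h\}} F(\nabla u) F_\xi(\nabla u) \cdot \nabla u \, dx = h^{-1} \int_{\{t < v < t+h\}} F^2(\nabla u) \, dx$, using Lemma \ref{lm-2.1}(iii). Sending $h \to 0$ and applying the co-area formula yields, for a.e.\ $t > 0$,
$$\int_{\{v = t\}} \frac{F^2(\nabla u)}{|\nabla u|} \, d\mathcal{H}^{n-1} \;=\; \int_{\{v > t\}} f \, \mathrm{sgn}(u) \, dx \;\leq\; \int_0^{\mu(t)} f^*(\sigma) \, d\sigma \;=\; \mu(t) f^{**}(\mu(t)),$$
the final inequality being the standard Hardy--Littlewood rearrangement bound.

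On the level set $\{v = t\}$ the outer normal to $\{v > t\}$ is $-\nabla v / |\nabla v|$, so the evenness and $1$-homogeneity of $F$ (together with $F(\nabla v) = F(\nabla u)$) give $F(\nu) = F(\nabla u)/|\nabla u|$ and hence $P_F(\{v > t\}) = \int_{\{v=t\}} F(\nabla u)/|\nabla u| \, d\mathcal{H}^{n-1}$. Combining Cauchy--Schwarz on this integral with $-\mu'(t) = \int_{\{v = t\}} |\nabla u|^{-1} \, d\mathcal{H}^{n-1}$ and the anisotropic isoperimetric inequality (\ref{2.2}) produces
$$n^2 \kappa_n^{2/n} \mu(t)^{2 - 2/n} \;\leq\; P_F(\{v > t\})^2 \;\leq\; \left(\int_{\{v = t\}} \frac{F^2(\nabla u)}{|\nabla u|} \, d\mathcal{H}^{n-1}\right)(-\mu'(t)) \;\leq\; \mu(t) f^{**}(\mu(t)) (-\mu'(t)).$$
Rearranging and applying $-du^*/ds(s) = 1/(-\mu'(u^*(s)))$ with $s = \mu(t)$ delivers the pointwise ODE bound, and integration in $s$ between $t_1$ and $t_2$ closes the proof.

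The main technical obstacle will be making the test-function calculation rigorous on the set $\{\nabla u = 0\}$, where $F_\xi$ is undefined, and on the set of critical values of $v$, where $\mu$ may jump or $|\nabla v|$ may vanish on an $\mathcal{H}^{n-1}$-positive set. Both issues are standard: the zero set is discarded by noting that $F(\nabla u) F_\xi(\nabla u)$ extends continuously to the origin of $\nabla u$, and one restricts to a.e.\ $t$ by Sard's lemma so that level sets are $C^1$ manifolds with $|\nabla v| > 0$. A minor further subtlety is the passage from $\mu$ to $u^*$ on plateaux of $u$, which is handled by the right-continuity of $u^*$ and a density argument along regular values.
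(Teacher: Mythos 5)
The paper itself does not prove this lemma: it is cited verbatim from the reference \cite{ZYCZ}, so there is no in-paper argument to compare against. Your proposal is the standard Talenti comparison argument, adapted to the Finsler setting via the anisotropic perimeter and isoperimetric inequality (\ref{2.2}), which is exactly the approach initiated by Alvino--Ferone--Trombetti--Lions \cite{AFTL} and used for this kind of result. The key computations are correct: testing with $T_h^t(u) = \mathrm{sgn}(u)\min\{(|u|-t)^+,h\}/h$ and invoking Euler's identity (Lemma \ref{lm-2.1}(iii)) gives $\tfrac{1}{h}\int_{\{t<|u|<t+h\}}F^2(\nabla u)\,dx = \int_\Omega f\,T_h^t(u)\,dx$; letting $h\to 0^+$ and using the co-area formula yields, for a.e.\ $t$, $\int_{\{|u|=t\}}F^2(\nabla u)/|\nabla u|\,d\mathcal{H}^{n-1} \leq \mu(t)f^{**}(\mu(t))$; and Cauchy--Schwarz applied to $P_F(\{|u|>t\}) = \int_{\{|u|=t\}}F(\nabla u)/|\nabla u|\,d\mathcal{H}^{n-1}$ combined with (\ref{2.2}) gives the differential inequality $n^2\kappa_n^{2/n}\mu(t)^{2-2/n} \leq \mu(t)f^{**}(\mu(t))(-\mu'(t))$, which integrates to the stated bound with precisely the constant $(n\kappa_n^{1/n})^{-2}$. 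The technical caveats you flag (the set $\{\nabla u = 0\}$, non-regular values, plateaux of $u^*$) are the genuine ones and are handled in the way you indicate: the flux $F(\nabla u)F_\xi(\nabla u)$ extends by $0$ across $\{\nabla u=0\}$, the absolutely continuous part of $\mu$ is what the co-area formula recovers (so $\int_{\{|u|=t\}}|\nabla u|^{-1}d\mathcal{H}^{n-1}\leq -\mu'(t)$ a.e., which is the direction needed), and the final passage from $\mu$ to $u^*$ is done by integrating the differential inequality rather than inverting pointwise.
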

\noindent {\bf Remark 1.} In fact, we can prove that an analogue of Lemma \ref{lm3.1} also holds for
functions in $W^{2, 2}(\R^n)$. See more details in \cite{ZYCZ}.

Next, we give a lemma which is important in the proof of the optimal descending growth condition.
\begin{lm}\label{lm3.3}\cite{LTZ}
Given any sequence $a=\{a_k\}_{k\geq0}$.  Let $p>1$, $\|a\|_1=\sum_{k=0}^{\infty}|a_k|$, $\|a\|_p=(\sum_{k=0}^{\infty}|a_k|^p)^{\frac{1}{p}}$, $\|a\|_{(e)}=(\sum_{k=0}^{\infty}|a_k|^pe^k)^{\frac{1}{p}}$ and $\mu(h)=\inf\{\|a\|_{(e)}:\|a\|_1=h,\|a\|_p\leq1\}$. Then for $h>1$, we have
\begin{equation*}
\mu(h)\thicksim\frac{e^\frac{h^\frac{p}{p-1}}{p}}{h^\frac{1}{p-1}}.
\end{equation*}
\end{lm}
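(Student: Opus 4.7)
The plan is to establish matching upper and lower bounds of the form $h^{-1/(p-1)} e^{h^{p/(p-1)}/p}$; since the statement is attributed to \cite{LTZ}, the goal is to reproduce the two-sided estimate, which splits naturally into an explicit construction for the upper bound and a H\"older split with an optimized cutoff for the lower bound.

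For the upper bound $\mu(h) \lesssim h^{-1/(p-1)} e^{h^{p/(p-1)}/p}$, I would test the functional against a flat block sequence: set $a_k = N^{-1/p}$ for $0 \leq k < N$ and $a_k = 0$ otherwise, with $N = \lceil h^{p/(p-1)} \rceil$ (followed by a trivial rescaling to enforce $\|a\|_1 = h$ exactly). Then $\|a\|_p = 1$, $\|a\|_1$ is comparable to $N^{(p-1)/p} = h$, and the geometric sum $\|a\|_{(e)}^p = N^{-1}(e^N - 1)/(e-1)$ yields the desired $\|a\|_{(e)} \lesssim h^{-1/(p-1)} e^{h^{p/(p-1)}/p}$.

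For the lower bound, take any admissible $a$ and any cutoff $K \geq 1$. A H\"older split with conjugate exponent $p' = p/(p-1)$ gives
\[
h = \sum_{k \leq K} |a_k| + \sum_{k > K} |a_k| e^{k/p} \cdot e^{-k/p} \leq (K+1)^{1/p'} \|a\|_p + \Bigl(\sum_{k > K} e^{-k/(p-1)}\Bigr)^{1/p'} \|a\|_{(e)}.
\]
Using $\|a\|_p \leq 1$ and summing the geometric tail (with $(p-1)p' = p$), this simplifies to
\[
h \leq (K+1)^{1/p'} + C_p\, e^{-(K+1)/p} \|a\|_{(e)}, \qquad \text{so} \qquad \|a\|_{(e)} \geq C_p^{-1} e^{(K+1)/p}\bigl(h - (K+1)^{1/p'}\bigr).
\]
Optimizing over $K$, the stationary point lies at $K+1 = (h - \eta)^{p'}$ with $\eta$ of order $h^{-1/(p-1)}$, whereupon $(K+1)/p = h^{p/(p-1)}/p - O(1)$ and the right-hand side becomes $\gtrsim h^{-1/(p-1)} e^{h^{p/(p-1)}/p}$, matching the upper bound.

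The main obstacle is precisely this last calibration. A naive choice such as $K+1 = (h/2)^{p'}$ already captures the correct exponential growth rate but degrades the sharp constant $1/p$ in the exponent. Extracting the value $h^{p/(p-1)}/p$ exactly forces $(K+1)^{1/p'}$ to be within $O(h^{-1/(p-1)})$ of $h$, which is the Taylor expansion indicated above; past that point the two bounds agree and the lemma follows.
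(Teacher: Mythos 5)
The paper does not prove this lemma---it is quoted verbatim from Lu, Tang and Zhu \cite{LTZ}---so there is no in-paper argument to compare against. Your proposal is correct and is the standard route to such two-sided estimates: a flat block test sequence for the upper bound, and a H\"older split with a sharply calibrated cutoff for the lower bound; the calibration $\eta\sim h^{-1/(p-1)}$ you identify is precisely what is needed to recover the sharp constant $1/p$ in the exponent while simultaneously producing the polynomial factor $h^{-1/(p-1)}$. Two small technicalities would each need a line in a complete write-up: the cutoff $K$ must be a nonnegative integer, so the stationary point $K+1=(h-\eta)^{p'}$ should be replaced by a nearby integer, which changes $e^{(K+1)/p}$ only by a bounded factor and (taking $\eta$ a slightly larger multiple of $h^{-1/(p-1)}$) still keeps $h-(K+1)^{1/p'}$ positive and comparable to $h^{-1/(p-1)}$; and for $h$ near $1$ the optimal cutoff drops below $1$, but there both $\mu(h)$ and the comparison function $e^{h^{p'}/p}h^{-1/(p-1)}$ are bounded above and below, so the equivalence is automatic. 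Neither point affects the substance of the argument.
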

By Lemma \ref{lm3.3}, we can obtain the optimal descending growth condition, which can be seen as the exponential version of the radial Sobolev inequality.
\begin{lm}\label{lm3.2}
Let $u\in W^{2,\frac{n}{2}}(\mathbb{R}^n)$ and $R>0$. If $u^\sharp(R)>1$ and $f:=-\Delta_Fu$ in $\mathbb{R}^n$ satisfies
\begin{equation*}
\int_{|\W_R|}^{+\infty}[f^{\ast\ast}(s)]^\frac{n}{2}ds\leq\left(\frac{n}{n-2}\right)^\frac{n}{2},
\end{equation*}
then we have
\begin{equation*}
\frac{R^ne^{\kappa_n^{\frac{2}{n-2}}[n(n-2)u^\sharp(R)]^\frac{n}{n-2}}}{[u^\sharp(R)]^\frac{n}{n-2}}\leq C\|u^\sharp\|_{L^\frac{n}{2}(\mathbb{R}^n\setminus \mathcal{W}_R)}^\frac{n}{2},
\end{equation*}
where $C>0$ is a constant independent of $u$ and $R$.
\end{lm}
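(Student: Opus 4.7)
The plan is to partition $\R^n\setminus\W_R$ into geometric annuli over which $u^\sharp$ is effectively constant, to control the descent of $u^\sharp$ from one annulus to the next via the $W^{2,n/2}(\R^n)$ analog of Lemma~\ref{lm3.1} noted in the remark following it, and then to convert the resulting summation bound into the sharp exponential lower bound through Lemma~\ref{lm3.3}. Concretely, I set $R_k:=R\,e^{k/n}$ so that $|\W_{R_k}|=\kappa_nR^ne^k$, and write $h:=u^\sharp(R)$ and $c_k:=u^\sharp(R_k)$. Applying Lemma~\ref{lm3.1} between the levels $|\W_R|$ and $|\W_{R_k}|$, then splitting the resulting integral into the $k$ pieces $[\kappa_nR^ne^{j-1},\kappa_nR^ne^j]$ and estimating each piece by H\"older with conjugate exponents $(n/2,\,n/(n-2))$ (the identity $(2/n-1)\cdot n/(n-2)=-1$ is what makes the residual $\xi^{-1}d\xi$ contribute exactly one log-length unit per annulus), one obtains
\begin{equation*}
h-c_k \leq \frac{1}{n^2\kappa_n^{2/n}}\sum_{j=1}^{k}b_j^{2/n},\qquad b_j:=\int_{\kappa_nR^ne^{j-1}}^{\kappa_nR^ne^j}[f^{**}]^{n/2}\,d\xi,\qquad \sum_j b_j\leq\Bigl(\tfrac{n}{n-2}\Bigr)^{n/2}.
\end{equation*}

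Monotonicity of $u^\sharp$ combined with the anisotropic polar formula gives $\|u^\sharp\|_{L^{n/2}(\R^n\setminus\W_R)}^{n/2}\geq(1-e^{-1})\kappa_nR^n\sum_{k\geq 1}c_k^{n/2}e^k$, so it suffices to lower-bound this last sum. Rescale by setting $\tilde a_j:=\tfrac{n-2}{n}b_j^{2/n}$ (so $\|\tilde a\|_{n/2}\leq 1$), $\gamma:=n(n-2)\kappa_n^{2/n}$ and $H:=\gamma h$; then $c_k\geq\gamma^{-1}(H-\tilde A_k)_+$ with $\tilde A_k:=\sum_{j\leq k}\tilde a_j$. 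Define effective decrements $\eta_j:=(H-\tilde A_{j-1})_+-(H-\tilde A_j)_+\geq 0$; one has $\|\eta\|_{n/2}\leq 1$ and, in the nontrivial subcase $\tilde A_\infty\geq H$, also $\|\eta\|_1=H$. Superadditivity of $t\mapsto t^{n/2}$ (valid for $n\geq 2$) yields $(H-\tilde A_k)^{n/2}=\bigl(\sum_{j>k}\eta_j\bigr)^{n/2}\geq\sum_{j>k}\eta_j^{n/2}$, and swapping the order of summation reduces matters to bounding $\sum_{j\geq 1}\eta_j^{n/2}e^j$ from below. A shifted application of Lemma~\ref{lm3.3} with $p=n/2$ then gives $\sum_j\eta_j^{n/2}e^j\geq C\mu(H)^{n/2}\sim H^{-n/(n-2)}e^{H^{n/(n-2)}}$, and since $H^{n/(n-2)}=\kappa_n^{2/(n-2)}[n(n-2)h]^{n/(n-2)}$ and $H^{-n/(n-2)}\sim h^{-n/(n-2)}$, the claimed inequality follows.

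The main obstacle is that we only have $c_k\geq\gamma^{-1}(H-\tilde A_k)_+$ rather than equality, so a naive substitution of the lower bound for $c_k$ would destroy the sharp constant; passing to the increments $\eta_j$ together with superadditivity is precisely what puts the sequence into the form $\|\eta\|_1=H$, $\|\eta\|_{n/2}\leq 1$ that Lemma~\ref{lm3.3} is built for. A secondary subtlety arises when $h$ is only slightly above $1$, so $H$ is close to $1$ and Lemma~\ref{lm3.3}'s asymptotic offers no gain: in that regime the left-hand side of the target inequality is itself comparable to $R^n$, and the H\"older estimate from the first paragraph directly forces $u^\sharp(r)\geq h/2>1/2$ on a Wulff annulus $\W_{Re^{c/n}}\setminus\W_R$ of definite size, which yields $\|u^\sharp\|_{L^{n/2}(\R^n\setminus\W_R)}^{n/2}\gtrsim R^n$. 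The complementary subcase $\tilde A_\infty<H$ makes $c_k$ uniformly bounded below and hence $\sum c_k^{n/2}e^k$ divergent, so the conclusion holds automatically.
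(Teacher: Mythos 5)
Your argument is correct and rests on the same ingredients and the same overall plan as the paper's: discretize on the dyadic Wulff annuli $r=Re^{k/n}$, control the descent of $u^\sharp$ across each annulus via Lemma~\ref{lm3.1} combined with H\"older (exploiting exactly the identity $(\tfrac{2}{n}-1)\cdot\tfrac{n}{n-2}=-1$), minorize $\|u^\sharp\|^{n/2}_{L^{n/2}(\R^n\setminus\W_R)}/R^n$ by $\sum_{k\geq 1}[u^\sharp(Re^{k/n})]^{n/2}e^k$, and invoke Lemma~\ref{lm3.3} to produce the sharp exponential. The one genuine bookkeeping difference is which sequence is handed to Lemma~\ref{lm3.3}. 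The paper applies it directly to the rescaled \emph{actual} decrements $a_k=c_n\bigl(u^\sharp(Re^{k/n})-u^\sharp(Re^{(k+1)/n})\bigr)$ with $c_n=n(n-2)\kappa_n^{2/n}$; these telescope to $\|a\|_1=c_n u^\sharp(R)$ automatically (since $u^\sharp\to 0$) and satisfy $\|a\|_{n/2}\leq1$, so Lemma~\ref{lm3.3} applies with no preparation, and the link back to the $u^\sharp$ integral is just the crude pointwise bound $a_k\leq h_k:=\sum_{j\geq k}a_j$. You instead work with the Lemma~\ref{lm3.1} \emph{upper bounds} $\tilde a_j$, truncate to $\eta_j$ so that $\|\eta\|_1=H$ holds exactly, and then need superadditivity of $t\mapsto t^{n/2}$ plus a summation swap to return to $\sum_k c_k^{n/2}e^k$; this is what forces the $\tilde A_\infty\gtrless H$ dichotomy, which for the paper's $a_k$ is a non-issue. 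Both routes yield the same constant; the paper's is a bit leaner for avoiding the truncation device, while your diagnosis that a naive substitution of $c_k\geq\gamma^{-1}(H-\tilde A_k)_+$ would lose the sharp constant unless one passes to increments with the right $\ell^1$-normalization is exactly the correct one. Finally, your ``secondary subtlety'' (the regime where the asymptotics of Lemma~\ref{lm3.3} give no gain) is resolved by the same small-annulus H\"older estimate that the paper uses to show $h_0\lesssim c_n u^*(\kappa_n r^n)$ for $R<r<Re^{b/n}$ with $b$ a fixed constant, so you have identified and handled the same boundary term.
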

\begin{proof}
Let $h_k=c_nu^*(\kappa_n R^ne^k)$, where $c_n=\frac{n-2}{n}
[n\kappa_n^{\frac{1}{n}}]^2$. Define $a_k=h_k-h_{k+1}$ and $a=\{a_k\}_{k\geq0}$.
Then $a_k\geq 0$ and $\sum_{k=0}^{+\infty}|a_k|=h_0=c_nu^*(\kappa_n R^n)$.

By Lemma \ref{lm3.1} and H\"{o}lder inequality, we have
\begin{align*}
h_k-h_{k+1}&=c_n[u^*(\kappa_n R^ne^k)-u^*(\kappa_n R^ne^{k+1})]\\
&\leq\frac{c_n}{[n\kappa_n^\frac{1}{n}]^2}\int_{\kappa_n R^ne^k}^{\kappa_n R^ne^{k+1}}\frac{f^{\ast\ast}(s)}{s^{1-\frac{2}{n}}}ds\\
&\leq \frac{c_n}{[n\kappa_n^{\frac{1}{n}}]^2}
\left(\int_{\kappa_n R^ne^k}^{\kappa_n R^ne^{k+1}}|f^{**}(s)|^\frac{n}{2}ds\right)^\frac{2}{n}\left(\int_{\kappa_n R^ne^k}^{\kappa_n R^ne^{k+1}}\left(\frac{1}{s^{1-\frac{2}{n}}}\right)^\frac{n}{n-2}ds\right)^{1-\frac{2}{n}}\\
&\leq\frac{c_n}{[n\kappa_n^{\frac{1}{n}}]^2}
\left(\int_{\kappa_n R^ne^k}^{\kappa_n R^ne^{k+1}}|f^{**}(s)|^\frac{n}{2}ds\right)^\frac{2}{n}
\end{align*}
Therefore we obtain
\begin{equation*}
\|a\|_\frac{n}{2}=(\sum_{k=0}^{\infty}|a_k|^\frac{n}{2})^\frac{2}{n}
=(\sum_{k=0}^{\infty}|h_k-h_{k+1}|^\frac{n}{2})^\frac{2}{n}\leq\frac{n-2}{n}
(\int_{|\mathcal{W}_R|}^{+\infty}|f^{**}(s)|^\frac{n}{2}ds)^\frac{2}{n}\leq1.
\end{equation*}
On the other hand, noticing that
\begin{align*}
&\frac{\int_R^{+\infty}|u^*(\kappa_nr^n)|^\frac{n}{2}r^{n-1}dr}{R^n}\\
=&\frac{\sum\limits_{k=0}^{\infty}\int_{Re^\frac{k}{n}}^{Re^\frac{k+1}{n}}|u^*(\kappa_nr^n)|^\frac{n}{2}r^{n-1}dr}{R^n}\\
\geq&\sum\limits_{k=0}^{\infty}\frac{[u^*(\kappa_n R^ne^{k+1})]^\frac{n}{2}}{R^n}\int_{Re^\frac{k}{n}}^{Re^\frac{k+1}{n}}r^{n-1}dr\\
\geq& C\sum\limits_{k=0}^{\infty}[u^*(\kappa_n R^ne^{k+1})]^\frac{n}{2}e^{k+1}\\
\geq& C\sum\limits_{k=1}^{\infty}(h_k)^\frac{n}{2}e^k\geq C\sum\limits_{k=1}^{\infty}(a_k)^\frac{n}{2}e^k.
\end{align*}
hence we have
\begin{equation}\label{3.1}
\|a\|_{(e)}^\frac{n}{2}=a_0^\frac{n}{2}+\sum\limits_{k=1}^{\infty}(a_k)^\frac{n}{2}e^k\leq C [h_0^\frac{n}{2}+\frac{\int_R^{+\infty}(u^\sharp(r))^\frac{n}{2}r^{n-1}dr}{R^n}].
\end{equation}
Next, we estimate $h_0$. Set $R<r<Re^{\frac{b}{n}}$,
where $b=((\frac{n}{n-2})^{-\frac{n}{2}}\frac{(n\kappa_n^\frac{1}{n})^2}{2})^\frac{n}{n-2}$. Since
\begin{align*}
&h_0-c_nu^*(\kappa_nr^n)\\
\leq&\frac{c_n}{[n\kappa_n^\frac{1}{n}]^2}\int_{|\mathcal{W}_R|}^{|\mathcal{W}_r|}\frac{f^{**}(s)}{s^{1-\frac{2}{n}}}ds\\
\leq&\frac{c_n}{[n\kappa_n^\frac{1}{n}]^2}[\int_{|\mathcal{W}_R|}^{|\mathcal{W}_r|}[f^{**}(s)]^\frac{n}{2}ds]^\frac{2}{n}b^\frac{n-2}{n}\\
\leq&\frac{c_n}{2}\leq\frac{c_nu^{\sharp}(R)}{2}=\frac{h_0}{2}.
\end{align*}
we have $h_0\leq C u^*(\kappa_n r^n)$  when $R<r<Re^{\frac{b}{n}}$. Hence we obtain
\begin{equation}\label{3.2}
\frac{\int_R^\infty(u^\sharp(r))^\frac{n}{2}r^{n-1}dr}{R^n}\geq C\frac{\int_R^{Re^{\frac{b}{n}}}h_0^\frac{n}{2}r^{n-1}dr}{R^n}\geq C
 h_0^\frac{n}{2}.
\end{equation}
Combining inequalities (\ref{3.1}) and (\ref{3.2}), we have
\begin{equation*}
\|a\|_{(e)}^\frac{n}{2}\leq C\frac{\int_R^\infty(u^\sharp(r))^\frac{n}{2}r^{n-1}dr}{R^n}.
\end{equation*}
Let $\alpha=(c_n)^\frac{n}{n-2}$. By using Lemma \ref{lm3.3}, we have
\begin{equation*}
\|a\|_{(e)}^\frac{n}{2}\geq C\frac{\exp(h_0^\frac{n}{n-2})}{(h_0)^{\frac{n}{n-2}}}\geq C
\frac{\exp[(c_nu^\sharp(R))^\frac{n}{n-2}]}
{(u^\sharp(R))^\frac{n}{n-2}}=\frac{\exp[\alpha(u^\sharp(R))^\frac{n}{n-2}]}{[u^\sharp(R)]^\frac{n}{n-2}}.
\end{equation*}
Thus we completed the proof of the lemma.
\end{proof}
We now give the Hardy-Littlewood type inequality under finsler metric due to Liu \cite{Liu2}.
\begin{lm}\label{lm3.4}\cite{Liu2}
Let $\varphi$ and $h$ be measurable functions in $\R^n$.
Assume that $H$ is a increasing function. Then we have
\begin{equation*}
\int_{\mathbb{R}^n}h(x)H(\varphi)dx\leq\int_{\mathbb{R}^n}h^\sharp(x)H(\varphi^{\sharp})dx.
\end{equation*}
\end{lm}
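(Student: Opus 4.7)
The plan is to run a layer-cake argument and exploit the two structural features of the convex symmetrization: equimeasurability with respect to Lebesgue measure, and the fact that super-level sets of $u^\sharp$ are all Wulff balls centered at the origin, hence nested. Throughout, I may assume $h\geq 0$ and $H\circ\varphi\geq 0$ (in the applications both quantities are already non-negative; otherwise one replaces $H$ by $H-\inf H$, which does not affect either side up to the integral $\int h$, handled by the same argument applied to the constant function).

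The first step is the layer-cake identity: for both sides of the inequality,
\[
\int_{\R^n} h(x)\, H(\varphi(x))\,dx = \int_0^\infty\!\!\int_0^\infty \bigl|\{h>s\} \cap \{H\circ\varphi>t\}\bigr|\,ds\,dt,
\]
and the same formula with $h$, $\varphi$ replaced by $h^\sharp$, $\varphi^\sharp$. It therefore suffices to compare the two integrands. For this I would record two properties of the map $u\mapsto u^\sharp$. (i) \emph{Equimeasurability}: from the definition $u^\sharp(x)=u^*(\kappa_n F^o(x)^n)$, the super-level set $\{u^\sharp>r\}$ is the Wulff ball $\mathcal{W}_\rho$ with $\kappa_n\rho^n=\mu_u(r)$, so $|\{u^\sharp>r\}|=\mu_u(r)=|\{u>r\}|$. (ii) \emph{Nestedness}: by the same formula, $\{h^\sharp>s\}$ and $\{H(\varphi^\sharp)>t\}$ are two concentric Wulff balls, so one contains the other and
\[
\bigl|\{h^\sharp>s\}\cap\{H(\varphi^\sharp)>t\}\bigr| = \min\bigl(|\{h^\sharp>s\}|,\, |\{H(\varphi^\sharp)>t\}|\bigr).
\]

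The monotonicity of $H$ enters next: up to a null set, $\{H\circ\varphi>t\} = \{\varphi>H^{-1}(t)\}$ (with $H^{-1}$ the right-continuous pseudo-inverse), and the same identity holds for $\varphi^\sharp$. Combined with property (i) this yields $|\{H\circ\varphi>t\}|=|\{H(\varphi^\sharp)>t\}|$ and $|\{h>s\}|=|\{h^\sharp>s\}|$. The trivial bound $|A\cap B|\leq\min(|A|,|B|)$ applied to $A=\{h>s\}$, $B=\{H\circ\varphi>t\}$, together with these identities and property (ii), then gives pointwise in $(s,t)$
\[
\bigl|\{h>s\}\cap\{H\circ\varphi>t\}\bigr| \leq \bigl|\{h^\sharp>s\}\cap\{H(\varphi^\sharp)>t\}\bigr|,
\]
and integrating completes the proof.

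The main obstacle is the bookkeeping around the pseudo-inverse in the monotonicity step when $H$ is merely non-decreasing (flat pieces and jumps): one must verify that up to a set of measure zero the super-level set $\{H\circ\varphi^\sharp>t\}$ is itself a Wulff ball whose volume matches that of $\{H\circ\varphi>t\}$, which is the same equimeasurability statement applied at a single threshold. Apart from this, the argument is purely geometric, and the anisotropy enters only through the observation that the $\sharp$-level sets are Wulff balls rather than Euclidean balls but are still concentric and hence nested.
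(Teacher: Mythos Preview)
The paper does not supply its own proof of this lemma; it is quoted from \cite{Liu2}. Your layer-cake argument is correct and is essentially the standard proof of the Hardy--Littlewood rearrangement inequality, transported to the convex (Wulff) symmetrization setting: the two ingredients---equimeasurability of $u$ and $u^\sharp$, and the fact that super-level sets of $u^\sharp$ are concentric Wulff balls and hence totally ordered by inclusion---are exactly what is needed. One small simplification: you do not actually need the pseudo-inverse $H^{-1}$. Equimeasurability of $\varphi$ and $\varphi^\sharp$ already forces $H\circ\varphi$ and $H\circ\varphi^\sharp$ to be equimeasurable for \emph{any} Borel $H$, while the monotonicity of $H$ is used only to guarantee that $H\circ\varphi^\sharp$ is non-increasing in $F^o(x)$, so that its super-level sets are Wulff balls and the nestedness step goes through.
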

By Lemma \ref{lm3.4}, we have the following fact.
\begin{lm}\label{lm3.5}
Let $0\leq\beta<n$ and $\lambda>0$, we have
\begin{equation*}
\int_{\mathbb{R}^n}\frac{\Phi(\lambda |u|^\frac{n}{n-2})dx}{(1+|u|^{\frac{n}{n-2}(1-\frac{\beta}{n})})F^o(x)^\beta}\leq\int_{\mathbb{R}^n}\frac{\Phi(\lambda (u^\sharp)^\frac{n}{n-2})dx}{(1+(u^\sharp)^{\frac{n}{n-2}(1-\frac{\beta}{n})})F^o(x)^\beta},
\end{equation*}
for any $u\in W^{2,\frac{n}{2}}(\mathbb{R}^n)$.
\end{lm}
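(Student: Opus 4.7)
The plan is to reduce the claim to the anisotropic Hardy-Littlewood inequality (Lemma \ref{lm3.4}) by rewriting the integrand as a product $h(x)\,H(|u(x)|)$ with
\[
h(x) := \frac{1}{F^o(x)^\beta}, \qquad H(t) := \frac{\Phi(\lambda t^{n/(n-2)})}{1 + t^{(n/(n-2))(1-\beta/n)}}.
\]
Once (i) $h$ coincides with its own convex symmetrization $h^\sharp$ and (ii) $H$ is non-decreasing on $[0,\infty)$ are verified, Lemma \ref{lm3.4} applied with $\varphi = |u|$ (so that $\varphi^\sharp = u^\sharp$) yields
\[
\int_{\R^n} h(x)\,H(|u|)\,dx \leq \int_{\R^n} h^\sharp(x)\,H(u^\sharp)\,dx = \int_{\R^n} h(x)\,H(u^\sharp)\,dx,
\]
which is exactly the inequality claimed.

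Claim (i) is a direct consequence of the definitions: the super-level sets $\{h > t\}$ are precisely the Wulff balls $\mathcal{W}_{t^{-1/\beta}}$, so $h$ is already radially decreasing with respect to the Finsler metric $F^o$ and therefore agrees with $h^\sharp$.

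For (ii), the natural approach is to expand $\Phi$ as a power series $\Phi(s) = \sum_{j \geq j_{n/2}-1} s^j/j!$ and write
\[
H(t) = \sum_{j \geq j_{n/2}-1} \frac{\lambda^j}{j!} \cdot \frac{t^{ja}}{1 + t^b}, \qquad a := \tfrac{n}{n-2},\ \ b := a\Bigl(1 - \tfrac{\beta}{n}\Bigr).
\]
A straightforward differentiation of $g_j(t) := t^{ja}/(1+t^b)$ gives
\[
g_j'(t) = \frac{t^{ja-1}\bigl[\,ja + (ja - b)\,t^b\,\bigr]}{(1+t^b)^2},
\]
which is non-negative whenever $ja \geq b$. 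Since $b \leq a$ and the summation starts at $j = j_{n/2} - 1 \geq 1$ (because $n \geq 3$ forces $j_{n/2} \geq 2$), every term is non-decreasing, and therefore $H$ is non-decreasing.

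I do not anticipate any serious obstacle: the argument is essentially a bookkeeping exercise once the correct splitting of the integrand is identified and the hypotheses of Lemma \ref{lm3.4} are verified. The only point requiring care is the term-by-term monotonicity of $H$, which reduces to the fact that every exponent $ja$ appearing in the Taylor expansion of $\Phi(\lambda t^a)$ meets or exceeds $b = a(1 - \beta/n)$, guaranteed by the lower bound on the summation index when $n \geq 3$.
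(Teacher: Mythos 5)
Your proposal is correct, and the reduction to Lemma \ref{lm3.4} via the splitting $h(x)H(|u|)$ with $h(x)=F^o(x)^{-\beta}$, $h=h^\sharp$, is exactly the paper's setup. Where you genuinely differ is in verifying monotonicity of $H$. The paper differentiates $H$ directly as a quotient, reorganizes $H'(t)$ into the form $\frac{\frac{n}{n-2}t^{(2-\beta)/(n-2)}h(t)}{(1+t^{b})^2}$ with an auxiliary function $h$, then shows $h\geq 0$ by computing $h'$ and checking $h(0)=0$, splitting into the cases $n\in\{3,4\}$ and $n\geq 5$ because the number of surviving terms in $\Phi$ changes. Your term-by-term argument bypasses all of this: expanding $\Phi(\lambda t^a)=\sum_{j\geq j_{n/2}-1}\lambda^j t^{ja}/j!$ reduces the claim to the monotonicity of each $g_j(t)=t^{ja}/(1+t^b)$, which follows from the one-line derivative computation and the observation $ja\geq a\geq b$ for $j\geq 1$ (valid since $j_{n/2}\geq 2$ when $n\geq 3$). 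Since a nonnegative sum of non-decreasing functions is non-decreasing, no convergence or interchange issue arises. Your route buys a uniform argument with no case split and no auxiliary function, at the cost of nothing; it is cleaner and I would regard it as an improvement on the paper's computation.
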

\begin{proof}
Let $u\in W^{2,\frac{n}{2}}(\mathbb{R}^n)$ and set
\begin{equation*}
(H\circ|u|)(x)=\frac{\Phi(\lambda |u|^\frac{n}{n-2})}{1+|u|^{\frac{n}{n-2}(1-\frac{\beta}{n})}}\quad\text{and}\quad g(x)=\frac{1}{F^o(x)^\beta},
\end{equation*}
where
\begin{equation*}
H(t)=\frac{\Phi(\lambda t^\frac{n}{n-2})}{1+t^{\frac{n}{n-2}(1-\frac{\beta}{n})}}.
\end{equation*}
First, we note that
\begin{equation*}
g(x)=\frac{1}{F^o(x)^\beta}=g^\sharp(x).
\end{equation*}
Then by Lemma \ref{lm3.4}, it is sufficient to
claim that $H(t)$ is nondecreasing on $\mathbb{R}^+$. In fact, let us denote $\xi(t)=\lambda t^\frac{n}{n-2}$, and by direct calculation we have
\begin{align*}
&H^\prime(t)\\
=&\frac{\frac{n}{n-2}\lambda t^\frac{2}{n-2}\bigl(\Phi(\xi)+\frac{(\lambda t^\frac{n}{n-2})^{j_\frac{n}{2}-2}}{(j_\frac{n}{2}-2)!}\bigr)(1+t^{\frac{n}{n-2}
(1-\frac{\beta}{n})})-\frac{n}{n-2}(1-\frac{\beta}{n})t^\frac{2-\beta}{n-2}\Phi(\xi)}
{(1+t^{\frac{n}{n-2}(1-\frac{\beta}{n})})^2}\\
=&\frac{(\Phi(\xi)+\frac{\xi^{j_\frac{n}{2}-2}}{(j_\frac{n}{2}-2)!})[\frac{n}{n-2}\xi+\frac{n}{n-2}\lambda t^\frac{n+2-\beta}{n-2}-\frac{n}{n-2}(1-\frac{\beta}{n})t^\frac{2-\beta}{n-2}]+\frac{n}{n-2}(1-\frac{\beta}{n})
\frac{\xi^{j_\frac{n}{2}-2}}{(j_\frac{n}{2}-2)!}t^\frac{2-\beta}{n-2}}{(1+t^{\frac{n}{n-2}(1-\frac{\beta}{n})})^2}\\
=&\frac{\frac{n}{n-2}t^\frac{2-\beta}{n-2}}{(1+t^{\frac{n}{n-2}(1-\frac{\beta}{n})})^2}[(\Phi(\xi)+
\frac{\xi^{j_\frac{n}{2}-2}}{(j_\frac{n}{2}-2)!})(\lambda t^\frac{\beta}{n-2}+\lambda t^\frac{n}{n-2}-(1-\frac{\beta}{n}))+(1-\frac{\beta}{n})\frac{\xi^{j_\frac{n}{2}-2}}{(j_\frac{n}{2}-2)!}]\\
=&\frac{\frac{n}{n-2}t^\frac{2-\beta}{n-2}h(t)}{(1+t^{\frac{n}{n-2}(1-\frac{\beta}{n})})^2},
\end{align*}
where
\begin{equation*}
h(t)=(\Phi(\xi)+\frac{(\lambda t^\frac{n}{n-2})^{j_\frac{n}{2}-2}}{(j_\frac{n}{2}-2)!})(\lambda t^\frac{\beta}{n-2}+\lambda t^\frac{n}{n-2}-(1-\frac{\beta}{n}))+(1-\frac{\beta}{n})\frac{(\lambda t^\frac{n}{n-2})^{j_\frac{n}{2}-2}}{(j_\frac{n}{2}-2)!}.
\end{equation*}
Now we distinguish
two cases to calculate the derivative of $h(t)$.

\textbf{Case 1.} $n=3$ or $n=4$.

In this case, since
\begin{equation*}
h(t)=e^{\lambda t^\frac{n}{n-2}}
(\lambda t^\frac{\beta}{n-2}+\lambda t^\frac{n}{n-2}-(1-\frac{\beta}{n}))+(1-\frac{\beta}{n})\lambda t^\frac{n}{n-2}.
\end{equation*}
Hence we have
\begin{align*}
&h^\prime(t)\\
=&\frac{n}{n-2}\lambda t^\frac{2}{n-2}
e^{\lambda t^\frac{n}{n-2}}
(\lambda t^\frac{\beta}{n-2}+\lambda t^\frac{n}{n-2}-(1-\frac{\beta}{n}))\\
+&e^{\lambda t^\frac{n}{n-2}}
(\frac{\beta}{n-2}\lambda t^\frac{2+\beta-n}{n-2}+\frac{n}{n-2}\lambda t^\frac{2}{n-2})+\lambda\frac{n}{n-2}(1-\frac{\beta}{n})
t^\frac{2}{n-2}\\
=&\frac{n}{n-2}\lambda t^\frac{2}{n-2}
e^{\lambda t^\frac{n}{n-2}}
(\lambda t^\frac{\beta}{n-2}+\lambda t^\frac{n}{n-2}+\frac{\beta}{n}t^\frac{\beta-n}{n-2}+\frac{\beta}{n})
+\lambda\frac{n}{n-2}(1-\frac{\beta}{n})t^\frac{2}{n-2}
\\
\geq&0.
\end{align*}

\textbf{Case 2.} $n\geq5$.

In this case, we have
\begin{align*}
&h^\prime(t)\\
=&\frac{n}{n-2}\lambda t^\frac{2}{n-2}[\Phi(\xi)+\frac{\xi^{j_\frac{n}{2}-2}}{(j_\frac{n}{2}-2)!}+\frac{\xi^{j_\frac{n}{2}-3}}{(j_\frac{n}{2}-3)!}](\lambda t^\frac{\beta}{n-2}+\lambda t^\frac{n}{n-2}-(1-\frac{\beta}{n}))\\
+&(\Phi(\xi)+\frac{\xi^{j_\frac{n}{2}-2}}{(j_\frac{n}{2}-2)!})(\frac{\beta}{n-2}\lambda t^\frac{2+\beta-n}{n-2}+\frac{n}{n-2}\lambda t^\frac{2}{n-2})+\lambda\frac{n}{n-2}(1-\frac{\beta}{n})\frac{\xi^{j_\frac{n}{2}-3}}{(j_\frac{n}{2}-3)!}t^\frac{2}{n-2}\\
=&\frac{n}{n-2}\lambda t^\frac{2}{n-2}[\Phi(\xi)+\frac{(\lambda t^\frac{n}{n-2})^{j_\frac{n}{2}-2}}{(j_\frac{n}{2}-2)!}](\lambda t^\frac{\beta}{n-2}+\lambda t^\frac{n}{n-2}+\frac{\beta}{n}t^\frac{\beta-n}{n-2}+\frac{\beta}{n})\\
+&\frac{n}{n-2}\lambda t^\frac{2}{n-2} \frac{\xi^{j_\frac{n}{2}-3}}{(j_\frac{n}{2}-3)!}(\lambda t^\frac{\beta}{n-2}+\lambda t^\frac{n}{n-2})
\\
\geq&0.
\end{align*}
Therefore, we obtain
\begin{equation*}
h(t)\geq h(0)=0,\text{ for } t\geq0.
\end{equation*}
Hence we get that  $H^\prime(t)\geq0$ when $t\geq0$, which means that $H$ is nondecreasing on $\mathbb{R}^+$.
Thus we completed the proof of the lemma.

\end{proof}

\section{Proof of Theorem \ref{main-thm-1}}\label{sec5}
In this section, we will prove Theorem \ref{main-thm-1}.
First, we recall the following inequality \cite{LL5}, which will be used in the proof of Theorem.
\begin{lm}\label{lm4.1}\cite{LL5}
Let $0<\gamma\leq1$, $1<p<\infty$ and $a(s,t)$ be a nonnegative measurable function on $(-\infty,+\infty)\times[0,+\infty)$ such that
\begin{equation*}
a(s,t)\leq1\text{ for }0<s<t,
\end{equation*}
\begin{equation*}
\sup\limits_{t>0}(\int_{-\infty}^0+\int_t^{\infty}a(s,t)^{p^\prime}ds)^{\frac{1}{p^\prime}}=b<\infty.
\end{equation*}
Then there is a constant $c_0=c_0(p,b,\gamma)$ such that if for $\phi\geq0$,
\begin{equation*}
\int_{-\infty}^{\infty}\phi(s)^pds\leq1,
\end{equation*}
then
\begin{equation*}
\int_0^{\infty}e^{-F_\gamma(t)}dt\leq c_0,
\end{equation*}
where
\begin{equation*}
F_\gamma(t)=\gamma t-\gamma(\int_{-\infty}^{\infty}a(s,t)\phi(s)ds)^{p^\prime}.
\end{equation*}
\end{lm}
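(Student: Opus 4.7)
The plan is to exploit the two hypotheses on $a(s,t)$ by splitting the integral defining $F_\gamma(t)$ into an ``inside'' piece over $(0,t)$ (where the pointwise bound $a\leq 1$ applies) and an ``outside'' piece over $(-\infty,0)\cup(t,\infty)$ (where the $L^{p'}$ bound by $b$ applies), apply H\"older's inequality to each piece against the mass constraint $\int \phi^p \leq 1$, and then feed the resulting estimate into a sharp weighted convexity inequality in order to reduce the problem to a one-dimensional exponential integral.

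Concretely, I would introduce $\lambda(t):=\int_0^t \phi(s)^p\,ds\in[0,1]$ and apply H\"older to obtain
\begin{equation*}
\int_{-\infty}^\infty a(s,t)\phi(s)\,ds \;\leq\; t^{1/p'}\lambda(t)^{1/p} + b\,(1-\lambda(t))^{1/p}.
\end{equation*}
The weighted H\"older inequality $(u+v)^{p'}\leq u^{p'}/\alpha^{p'-1}+v^{p'}/(1-\alpha)^{p'-1}$ with the choice $\alpha=\lambda(t)$ then collapses, via the identity $p'/p=p'-1$, to the clean pointwise bound
\begin{equation*}
\Bigl(\int_{-\infty}^\infty a(s,t)\phi(s)\,ds\Bigr)^{p'} \;\leq\; t + b^{p'},
\end{equation*}
which already gives the uniform lower bound $F_\gamma(t)\geq -\gamma b^{p'}$, hence $e^{-F_\gamma(t)}\leq e^{\gamma b^{p'}}$. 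This is the key algebraic step.

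The uniform bound is not by itself sufficient for integrability over $[0,\infty)$, so I would refine the convexity estimate by writing $(u+v)^{p'}\leq (1+\varepsilon)u^{p'}+C_\varepsilon v^{p'}$ for a small $\varepsilon>0$, producing the sharper lower bound $F_\gamma(t)\geq \gamma t\bigl(1-(1+\varepsilon)\lambda(t)^{p'-1}\bigr)-C_\varepsilon\gamma b^{p'}$. I would then split $[0,\infty)$ into the region $\{\lambda(t)\leq \lambda_\ast\}$, for a fixed $\lambda_\ast<1$ chosen so that $(1+\varepsilon)\lambda_\ast^{p'-1}<1$, and its complement. On the first region $F_\gamma(t)$ dominates a positive multiple of $t$, so $e^{-F_\gamma}$ contributes an integrable exponential. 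On the second region, the monotonicity of $\lambda$ and the bound $\lambda\leq 1$ allow a dyadic decomposition in the variable $1-\lambda(t)$, and summing a geometric series yields the quantitative constant $c_0=c_0(p,b,\gamma)$.

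The main obstacle, as in Adams' original lemma, is precisely the regime where $\lambda(t)$ approaches $1$: the sharp inequality $F_\gamma(t)\geq -\gamma b^{p'}$ is essentially attained there, so mere boundedness of $e^{-F_\gamma}$ does not give integrability. The work is in converting the residual smallness of $1-\lambda(t)$ into exponential decay on the tail, and this is where the careful dyadic bookkeeping in the last step is indispensable.
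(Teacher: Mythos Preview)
The paper does not prove this lemma; it is quoted from Lam--Lu \cite{LL5}, itself a mild extension (inserting the parameter $\gamma$) of Adams' original one-dimensional lemma in \cite{A}. So there is no in-paper argument to compare against, only the classical Adams proof.

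Your outline tracks that proof correctly through the uniform lower bound: the inside/outside split, the two H\"older estimates, and the weighted-convexity identity giving $\bigl(\int a\phi\bigr)^{p'}\le t+b^{p'}$ are all right, as is the $(1+\varepsilon)$ refinement on the region $\{\lambda(t)\le\lambda_\ast\}$. The gap is in the tail region $\{\lambda(t)>\lambda_\ast\}$. Your only estimate for the inside piece is $\int_0^t\phi\le t^{1/p'}\lambda(t)^{1/p}$, and once $\lambda(t)\to1$ this degenerates to $t^{1/p'}$, recovering nothing beyond $F_\gamma\ge-\gamma b^{p'}$. A dyadic decomposition in $1-\lambda(t)$ cannot repair this on its own: the hypotheses give no control whatsoever on the measure of the level sets $\{1-\lambda(t)\in(2^{-k-1},2^{-k}]\}$, and the set $\{\lambda(t)=1\}$ --- which your decomposition does not even cover --- can have infinite measure (take $\phi$ compactly supported in $(0,1)$ with $\int\phi^p=1$; then $\lambda(t)=1$ for all $t\ge1$, your bound gives only $F_\gamma\ge0$ there, yet $\int_1^\infty e^{-F_\gamma}$ must be shown finite). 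The missing ingredient, and what Adams actually does, is a \emph{second} splitting of the inside integral at a reference point $t_1<t$ lying in the bad set: writing $\int_0^t\phi=\int_0^{t_1}\phi+\int_{t_1}^t\phi$ and applying H\"older on $[t_1,t]$ against the small residual mass $\int_{t_1}^t\phi^p\le1-\lambda(t_1)$ is precisely what converts ``$\lambda$ near $1$'' into linear growth of $F_\gamma$ in $t-t_1$. From this one bounds $|\{t:F_\gamma(t)\le\mu\}|$ linearly in $\mu$ and concludes via the layer-cake formula. Your sketch names the obstacle correctly but does not supply this second split, and without it the argument does not close.
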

Now we give the proof of Theorem $\ref{main-thm-1}$.

{\bf Proof of Theorem \ref{main-thm-1}}:
Let us fix $u\in W^{2, \frac{n}{2}}(\R^n)$ such that $\|\Delta_F u\|_\frac{n}{2}\leq1$ and define
\begin{equation*}
R_{0}=\inf \left\{r>0:u^\sharp(r)\leq1\right\}\in[0,\infty).
\end{equation*}
Without loss of generality, we may assume that $R_{0}>0$.

By Lemma \ref{lm3.5}, we have
\begin{align*}
&\int\limits_{\mathbb{R}^{n}} \frac{\Phi(\lambda_n (1-\frac{\beta}{n}) |u|^\frac{n}{n-2})}{\left(1 + |u|^{\frac{n}{n-2}\left(1-\frac{\beta}{n}\right)}\right)F^o(x)^{\beta}}dx\\
\leq&\int\limits_{\mathbb{R}^{n}} \frac{\Phi(\lambda_n (1-\frac{\beta}{n}) (u^{\sharp})^\frac{n}{n-2})}{\left(1 + (u^{\sharp})^{\frac{n}{n-2}\left(1-\frac{\beta}{n}\right)}\right)F^o(x)^{\beta}}dx \\
=&\int\limits_{\mathcal{W}_{R_{0}}}\frac{\Phi(\lambda_n (1-\frac{\beta}{n})(u^{\sharp})^\frac{n}{n-2})}{\left(1 + (u^{\sharp})^{\frac{n}{n-2}\left(1-\frac{\beta}{n}\right)}\right)F^o(x)^{\beta}}dx\\
+&\int\limits_{\mathbb{R}^{n}\setminus \mathcal{W}_{R_{0}}}\frac{\Phi(\lambda_n (1-\frac{\beta}{n})(u^{\sharp})^\frac{n}{n-2})}{\left(1 + (u^{\sharp})^{\frac{n}{n-2}\left(1-\frac{\beta}{n}\right)}\right)F^o(x)^{\beta}}dx\\
=&I+J.
\end{align*}
We now estimate $J$. Since when $0<t\leq1$, $\Phi(\lambda_n(1-\frac{\beta}{n}) t^\frac{n}{n-2})\leq C t^\frac{n}{2}$,
and $u^{\sharp}(r)\leq1$ on $\mathbb{R}^{n}\setminus \mathcal{W}_{R_{0}}$ , we obtain
\begin{align*}
J &\leq C \int\limits_{\{u^{\sharp} \leq 1\}} \frac{(u^{\sharp})^\frac{n}{2}}{F^o(x)^{\beta}}dx\\
&\leq C \int\limits_{\{u^{\sharp} \leq 1; F^o(x) \geq \|u^{\sharp}\|_\frac{n}{2}^\frac{1}{2}\}} \frac{(u^{\sharp})^\frac{n}{2}}{F^o(x)^{\beta}}dx + C \int\limits_{\{u^{\sharp} \leq 1; F^o(x)< \|u^{\sharp}\|_\frac{n}{2}^\frac{1}{2}\}} \frac{(u^{\sharp})^\frac{n}{2}}{F^o(x)^{\beta}}dx\\
&\leq C \left[ \int\limits_{\mathbb{R}^{n}}(u^{\sharp})^\frac{n}{2}dx \right]^{1 - \frac{\beta}{n}}\\
&= C \|u\|_\frac{n}{2}^{\frac{n}{2}\left(1-\frac{\beta}{n}\right)}.
\end{align*}
Next, we estimate $I$.  Set
\begin{align*}
f&=-\Delta_Fu,\\
\alpha&=\int_0^{+\infty}\left[ f^{\ast\ast}(s) \right]^\frac{n}{2} ds.
\end{align*}
Using Lemma \ref{lm2.2} with $p=\frac{n}{2}$ and the assumption $\|\Delta_F u\|_{\frac{n}{2}}\leq 1$,
we have $\alpha\leq\left(\frac{n}{n-2}\right)^\frac{n}{2}$.

Now we fix $\varepsilon_{0}\in(0,1)$ (we note that $\varepsilon_{0}$ is independent of $u$), and choose $R_{1}=
R_{1}(u)>0$ such that
\begin{align}
\int_{0}^{|\mathcal{W}_{R_{1}}|} \left[ f^{\ast\ast}(s) \right]^{\frac{n}{2}} ds &= \alpha (1 - \varepsilon_{0}),\label{5.1}\\
\int_{|\mathcal{W}_{R_{1}}|}^{\infty} \left[ f^{\ast\ast}(s) \right]^{\frac{n}{2}} ds &= \alpha \varepsilon_{0}.\label{5.2}
\end{align}
Applying Lemma \ref{lm3.1} (see also Remark 1) and H\"{o}lder inequality, we
have for $0<r_{1}<r_{2}$,
\begin{equation}\label{5.3}
u^{\sharp}(r_{1}) - u^{\sharp}(r_{2}) \leq \frac{1}{[n\kappa_n^\frac{1}{n}]^2}
\left(\int_{|\mathcal{W}_{r_{1}}|}^{|\mathcal{W}_{r_{2}}|} \left[ f^{\ast\ast}(s) \right]^\frac{n}{2} ds\right)^\frac{2}{n}\left(\ln\frac{r_{2}^{n}}{r_{1}^{n}}\right)^{1-\frac{2}{n}}.
\end{equation}
Hence, by (\ref{5.1}), (\ref{5.2}) and (\ref{5.3}), we obtain
\begin{align}
u^{\sharp}(r_{1}) - u^{\sharp}(r_{2}) &\leq \frac{1}{[n\kappa_n^\frac{1}{n}]^2}\left(\alpha(1-\varepsilon_0)\right)^\frac{2}{n}
\left(\ln\frac{r_{2}^{n}}{r_{1}^{n}}\right)^{1-\frac{2}{n}}\quad \text{for }0<r_{1}<r_{2}\leq R_{1},\label{5.4}\\
u^{\sharp}(r_{1}) - u^{\sharp}(r_{2}) &\leq \frac{1}{[n\kappa_n^\frac{1}{n}]^2}\left(\alpha\varepsilon_0\right)^\frac{2}{n}
\left(\ln\frac{r_{2}^{n}}{r_{1}^{n}}\right)^{1-\frac{2}{n}}\quad \text{for }R_{1}\leq r_{1}<r_{2}.\label{5.5}
\end{align}
We distinguish two cases.\\
\textbf{Case 1.} $0<R_{0}\leq R_{1}$.

In this case, by (\ref{5.4}), for $0<r\leq R_{0}$, we have
\begin{equation*}
u^{\sharp}(r)\leq1+\frac{1}{[n\kappa_n^\frac{1}{n}]^2}(\alpha(1-\varepsilon_0))^\frac{2}{n}\left(\ln\frac{R_0^n}{r^n}\right)^{1-\frac{2}{n}}.
\end{equation*}
It is well known that there exists a constant $C_\varepsilon=(1-(1+\varepsilon)^{-\frac{n-2}{2}})^\frac{-2}{n-2}$ such that
\begin{equation}\label{5.100}
(1+s^\frac{n-2}{n})^\frac{n}{n-2}\leq(1+\varepsilon)s+C_\varepsilon,\text{ for }s>0.
\end{equation}
Then
\begin{equation*}
[u^\sharp(r)]^\frac{n}{n-2}\leq(1+\varepsilon)\frac{(\alpha(1-\varepsilon_0))^\frac{2}{n-2}}
{[n\kappa_n^\frac{1}{n}]^\frac{2n}{n-2}}\ln(\frac{R_0}{r})^n+C_\varepsilon.
\end{equation*}
Set $\varepsilon=1-(1-\varepsilon_0)^\frac{2}{n-2}$, then $(1+\varepsilon)(1-\varepsilon_0)^\frac{2}{n-2}<1$.
Since $\frac{\lambda_n}{[n\kappa_n^\frac{1}{n}]^\frac{2n}{n-2}}=(1-\frac{2}{n})^{\frac{n}{n-2}}$ and
$\alpha\leq (\frac{n}{n-2})^{\frac{n}{2}}$,
we have
\begin{align}\label{5.1000}
I= &\int\limits_{\mathcal{W}_{R_{0}}} \frac{\Phi(\lambda_n (1-\frac{\beta}{n})(u^\sharp)^\frac{n}{n-2})}{\left( 1 + (u^{\sharp})^{\frac{n}{n-2}\left(1-\frac{\beta}{n}\right)} \right) F^o(x)^{\beta}}dx \nonumber\\
\leq&\int_{\mathcal{W}_{R_0}}\exp(\lambda_n(u^\sharp)^\frac{n}{n-2})\frac{1}{F^o(x)^\beta}dx\nonumber\\
=&n\kappa_n\int_0^{R_0}\exp(\lambda_n[u^*(\kappa_nr^n)]^\frac{n}{n-2})r^{n-1-\beta}dr\nonumber\\
\leq& C\int_0^{R_0}\exp\biggl(\lambda_n\bigl((1+\varepsilon)\frac{(\alpha(1-\varepsilon_0))^\frac{2}{n-2}}{[n\kappa_n^\frac{1}{n}]^\frac{2n}{n-2}}
\ln(\frac{R_0}{r})^n+C_\varepsilon\bigr)\biggr)r^{n-1-\beta}dr\nonumber\\
\leq&C\int_0^{R_0}R_0^{n(1-\varepsilon^2)}r^{n-1-\beta-n(1-\varepsilon^2)}dr\\
\leq&C R_0^{n-\beta}\nonumber\\
=&C\left(\int_{\mathcal{W}_{R_0}}dx\right)^{1-\frac{\beta}{n}}\nonumber\\
\leq&C\left(\int_{\mathcal{W}_{R_0}}(u^\sharp)^\frac{n}{2}dx\right)^{1-\frac{\beta}{n}}\nonumber\\
=&C\|u^\sharp\|_\frac{n}{2}^{\frac{n}{2}\left(1-\frac{\beta}{n}\right)}\nonumber\\
=&C\|u\|_\frac{n}{2}^{\frac{n}{2}\left(1-\frac{\beta}{n}\right)}\nonumber.
\end{align}
Thus we get the desired inequality when $R_{0}\leq R_{1}$.
We note that in (\ref{5.1000}), we can choose $\epsilon_0$ is sufficiently small, which
asserts that $\epsilon$ is sufficiently close to $1$ and
then $r^{n-1-\beta-n(1-\varepsilon^2)}$ is integrable.
\\
\textbf{Case 2.} $0<R_{1}<R_{0}$.

In this case, we write that
\begin{align*}
I &= \int_{\mathcal{W}_{R_{0}}} \frac{\Phi(\lambda_n (1-\frac{\beta}{n})(u^\sharp)^\frac{n}{n-2})}{\left( 1 + (u^{\sharp})^{\frac{n}{n-2}\left(1-\frac{\beta}{n}\right)} \right) F^o(x)^{\beta}} \, dx \\
&=\int\limits_{\mathcal{W}_{R_{1}}}\frac{\Phi(\lambda_n (1-\frac{\beta}{n})(u^\sharp)^\frac{n}{n-2})}{\left( 1 + (u^{\sharp})^{\frac{n}{n-2}\left(1-\frac{\beta}{n}\right)} \right) F^o(x)^{\beta}}
+\int\limits_{\mathcal{W}_{R_{0}}\setminus \mathcal{W}_{R_{1}}} \frac{\Phi(\lambda_n (1-\frac{\beta}{n})(u^\sharp)^\frac{n}{n-2})}{\left( 1 + (u^{\sharp})^{\frac{n}{n-2}\left(1-\frac{\beta}{n}\right)} \right) F^o(x)^{\beta}} \, dx \\
&=I_{1}+I_{2}.
\end{align*}
Again, by (\ref{5.5}), we have for $R_{1}\leq r\leq R_{0}$,
\begin{equation*}
u^\sharp(r)\leq1+\frac{1}{[n\kappa_n^\frac{1}{n}]^2}(\alpha\varepsilon_0)^\frac{2}{n}(\ln(\frac{R_0}{r})^n)^{1-\frac{2}{n}},
\end{equation*}
and
\begin{equation*}
[u^\sharp(r)]^\frac{n}{n-2}\leq(1+\varepsilon_1)\frac{[\alpha\varepsilon_0]^\frac{2}{n-2}}{[n\kappa_n^\frac{1}{n}]^\frac{2n}{n-2}}\ln(\frac{R_0}{r})^n+C_{\varepsilon_1},
\end{equation*}
where $\varepsilon_1=1-\varepsilon_0^\frac{2}{n-2}$ and $C_{\varepsilon_1}=\left(1-(1+\varepsilon_1)^{-\frac{n-2}{2}}\right)^{-\frac{2}{n-2}}$. Hence we have
\begin{align*}
I_{2} =&\int\limits_{\mathcal{W}_{R_{0}} \setminus \mathcal{W}_{R_{1}}} \frac{\Phi(\lambda_n (1-\frac{\beta}{n})(u^{\sharp})^\frac{n}{n-2})}{\left( 1 + (u^{\sharp})^{\frac{n}{n-2}\left(1-\frac{\beta}{n}\right)} \right) F^o(x)^{\beta}}dx \\
\leq&\int\limits_{\mathcal{W}_{R_0} \setminus \mathcal{W}_{R_1}}\exp(\lambda_n(u^\sharp)^\frac{n}{n-2})\frac{1}{F^o(x)^\beta}dx\\
=&n\kappa_n\int_{R_1}^{R_0}\exp(\lambda_n[u^*(\kappa_nr^n)]^\frac{n}{n-2})r^{n-1-\beta}dr\\
\leq& n\kappa_n\int_{R_1}^{R_0}\exp\biggl(\lambda_n\bigl((1+\varepsilon_1)\frac{(\alpha\varepsilon_0)^\frac{2}{n-2}}{[n\kappa_n^\frac{1}{n}]^\frac{2n}{n-2}}
\ln(\frac{R_0}{r})^n+C_{\varepsilon_1}\bigr)\biggr)r^{n-1-\beta}dr\\
\leq& CR_0^{n(1-\varepsilon_1^2)}\int_{R_{1}}^{R_{0}}r^{n-1-\beta-n(1-\varepsilon_1^2)}dr \\
=& CR_0^{n(1-\varepsilon_1^2)}\left[R_{0}^{n-\beta-n(1-\varepsilon_1^2)}-R_{1}^{n-\beta-n(1-\varepsilon_1^2)}\right] \\
=& C\left[ R_{0}^{n-\beta} - R_0^{n(1-\varepsilon_1^2)}R_{1}^{n-\beta-n(1-\varepsilon_1^2)} \right] \\
\leq& C  R_{0}^{n-\beta}\\
=&C\left(\int_{\mathcal{W}_{R_0}}dx\right)^{1-\frac{\beta}{n}}\\
\leq&C\left(\int_{\mathcal{W}_{R_0}}(u^\sharp)^\frac{n}{2}dx\right)^{1-\frac{\beta}{n}}\\
=&C\|u^\sharp\|_\frac{n}{2}^{\frac{n}{2}\left(1-\frac{\beta}{n}\right)}\\
=&C\|u\|_\frac{n}{2}^{\frac{n}{2}\left(1-\frac{\beta}{n}\right)}.
\end{align*}
To estimate $I_{1}$, we first note that, by (\ref{5.100}), we have
\begin{equation*}
[u^\sharp(r)]^\frac{n}{n-2}\leq(1+\varepsilon_2)
[u^\sharp(r)-u^\sharp(R_1)]^\frac{n}{n-2}+C_{\varepsilon_2}[u^\sharp(R_1)]^\frac{n}{n-2}
\end{equation*}
for all $0\leq r\leq R_1$ and $\varepsilon_2>0$. As a consequence, we get
\begin{align}
&\int_{\mathcal{W}_{R_1}}\frac{\Phi(\lambda_n(1-\frac{\beta}{n})(u^\sharp)^\frac{n}{n-2})}
{\left(1+(u^\sharp)^{\frac{n}{n-2}\left(1-\frac{\beta}{n}\right)}\right)F^o(x)^\beta}dx\nonumber\\
=&n\kappa_n \int_0^{R_1} \frac{\Phi(\lambda_n(1-\frac{\beta}{n})[u^*(\kappa_nr^n)]^\frac{n}{n-2})}
{\left(1+[u^*(\kappa_nr^n)]^{\frac{n}{n-2}\left(1-\frac{\beta}{n}\right)}\right)F^o(x)^\beta}dr
\nonumber\\
\leq&  \frac{n\kappa_n}{[u^*(\kappa_nR_1^n)]^{\frac{n}{n-2}(1-\frac{\beta}{n})}}
\int_0^{R_1} \exp(\lambda_n(1-\frac{\beta}{n})[u^*(\kappa_nr^n)]^\frac{n}{n-2})r^{n-1-\beta}dr
\nonumber\\
\leq&
\frac{n\kappa_n\exp(C_{\varepsilon_2}\lambda_n(1-\frac{\beta}{n})[u^*(\kappa_nR_1^n)]^\frac{n}{n-2})}{[u^*(\kappa_nR_1^n)]
^{\frac{n}{n-2}(1-\frac{\beta}{n})}}
\nonumber\\
&\times \int_0^{R_1} \exp(  \lambda_n(1-\frac{\beta}{n}) (1+\varepsilon_2)[u^\sharp(r)-u^\sharp(R_1)]^\frac{n}{n-2} )r^{n-1-\beta}dr
.\label{5.6}
\end{align}
Since
\begin{equation*}
\frac{\lambda_n}{[n\kappa_n^\frac{1}{n}]^\frac{2n}{n-2}}=\left(1-\frac{2}{n}\right)^\frac{n}{n-2},
\end{equation*}
and
\begin{equation*}
0<u^\sharp(r)-u^\sharp(R_1)\leq\frac{1}{[n\kappa_n^\frac{1}{n}]^2}\int_{\kappa_nr^n}^{\kappa_nR_1^n}\frac{f^{**}(s)}{s^{1-\frac{2}{n}}}ds,
\end{equation*}
we have
\begin{align}
&\int_{\mathcal{W}_{R_1}}\frac{\Phi(\lambda_n(1-\frac{\beta}{n})(u^\sharp)^\frac{n}{n-2})}
{\left(1+(u^\sharp)^{\frac{n}{n-2}\left(1-\frac{\beta}{n}\right)}\right)F^o(x)^\beta}dx\nonumber\\
\leq&
\frac{n\kappa_n\exp(C_{\varepsilon_2}\lambda_n(1-\frac{\beta}{n})[u^*(\kappa_nR_1^n)]
^\frac{n}{n-2})}{[u^*(\kappa_nR_1^n)]^{\frac{n}{n-2}(1-\frac{\beta}{n})}}
\nonumber\\
\times &\int_0^{R_1} \exp(  \lambda_n(1-\frac{\beta}{n}) (1+\varepsilon_2)[u^\sharp(r)-u^\sharp(R_1)]^\frac{n}{n-2} )r^{n-1-\beta}dr
\nonumber\\
=&R_1^{n-\beta}\frac{n\kappa_n\exp(C_{\varepsilon_2}\lambda_n(1-\frac{\beta}{n})
[u^*(\kappa_nR_1^n)]^\frac{n}{n-2})}{n[u^*(\kappa_nR_1^n)]^{\frac{n}{n-2}(1-\frac{\beta}{n})}}
\nonumber\\
\times & \int_0^{\infty} \exp([  (1+\varepsilon_2)^{\frac{n-2}{n}}  (1-\frac{2}{n}) (1-\frac{\beta}{n})^{\frac{n-2}{n}}
\int_{\kappa_nR_1^n e^{-t}}^{\kappa_nR_1^n}\frac{f^{**}(s)}{s^{1-\frac{2}{n}}}ds
   ]^{\frac{n}{n-2}})e^{-t(1-\frac{\beta}{n})}dt,
\label{5.6.0}
\end{align}
where the last equation we make the change of variable $r=R_1e^{-\frac{t}{n}}$. Write that
\begin{equation*}
\phi(t)=(\kappa_n)^{\frac{2}{n}}R_1^2(1+\varepsilon_2)^\frac{n-2}{n}(1-\frac{2}{n})f^{\ast\ast}(\kappa_nR_1^ne^{-t})e^{-\frac{2t}{n}}.
\end{equation*}
Then we have
\begin{align*}
&\int_{0}^{\infty}[\phi(t)]^\frac{n}{2}dt\\
=&\kappa_nR_1^n\left(\frac{n-2}{n}\right)^\frac{n}{2}(1+\varepsilon_2)^\frac{n-2}{2}\int_0^{+\infty}[f^{\ast\ast}
(\kappa_nR_1^ne^{-t})]^\frac{n}{2}e^{-t}dt\\
=&\left(\frac{n-2}{n}\right)^\frac{n}{2}(1+\varepsilon_2)^\frac{n-2}{2}\int_0^{\kappa_nR_1^n}[f^{\ast\ast}(s)]^\frac{n}{2}ds\\
\leq&\alpha(1-\varepsilon_0)(1+\varepsilon_2)^\frac{n-2}{2}\left(\frac{n-2}{n}\right)^\frac{n}{2}.
\end{align*}
If we choose $\varepsilon_2=(\frac{1}{1-\varepsilon_0})^\frac{2}{n-2}-1$, we have that
$$
\int_{0}^{\infty}[\phi(t)]^\frac{n}{2}dt\leq 1.
$$
Now, using Lemma \ref{lm4.1} with $\gamma=1-\frac{\beta}{n}$, $p=\frac{n}{2}$ and $a(s,t)=\chi_{[0,t]}(s)$, we have
\begin{equation}\label{5.7}
\int_0^\infty\exp(((1-\frac{\beta}{n})^\frac{n-2}{n}\int_0^t\phi(s)ds)^\frac{n}{n-2})e^{-t(1-\frac{\beta}{n})}dt\leq c_0.
\end{equation}
Noticing that
\begin{align*}
&\int_0^\infty\exp(((1-\frac{\beta}{n})^\frac{n-2}{n}\int_0^t\phi(s)ds)^\frac{n}{n-2})e^{-t(1-\frac{\beta}{n})}dt\\
=&\int_0^\infty\exp(    [(1-\frac{\beta}{n})^\frac{n-2}{n} \int_0^t
(\kappa_n)^{\frac{2}{n}}R_1^2(1+\varepsilon_2)^\frac{n-2}{n}(1-\frac{2}{n})f^{\ast\ast}(\kappa_nR_1^ne^{-s})e^{-\frac{2s}{n}}ds
  ]^{\frac{n}{n-2}})\\
  &   \cdot e^{-t(1-\frac{\beta}{n})}dt\\
=& \int_0^\infty\exp(    [(1-\frac{\beta}{n})^\frac{n-2}{n}
(1+\varepsilon_2)^\frac{n-2}{n}(1-\frac{2}{n})
 \int_{\kappa_nR_1^ne^{-t}}^{\kappa_nR_1^n}
\frac{f^{\ast\ast}(r)}{r^{1-\frac{2}{n}}}dr
  ]^{\frac{n}{n-2}})   e^{-t(1-\frac{\beta}{n})}dt,
\end{align*}
where we make the change of variable $r=\kappa_nR_1^ne^{-s}$, we obtain
\begin{equation}\label{5.7.0}
\int_0^\infty\exp(    [(1-\frac{\beta}{n})^\frac{n-2}{n}
(1+\varepsilon_2)^\frac{n-2}{n}(1-\frac{2}{n})
 \int_{\kappa_nR_1^ne^{-t}}^{\kappa_nR_1^n}
\frac{f^{\ast\ast}(r)}{r^{1-\frac{2}{n}}}dr
  ]^{\frac{n}{n-2}})   e^{-t(1-\frac{\beta}{n})}dt\leq C.
\end{equation}
Since $\varepsilon_2=(\frac{1}{1-\varepsilon_0})^\frac{2}{n-2}-1$, we have $C_{\varepsilon_2}=\varepsilon_0^{-\frac{2}{n-2}}$.
Thus by (\ref{5.6.0}) and (\ref{5.7.0}), we get
\begin{align*}
&\int_{\mathcal{W}_{R_1}}\frac{\Phi(\lambda_n(1-\frac{\beta}{n})(u^\sharp)^\frac{n}{n-2})}{\left(1+(u^\sharp)^{\frac{n}{n-2}\left(1-\frac{\beta}{n}\right)}\right)
F^o(x)^\beta}dx\\
\leq&CR_1^{n-\beta}\frac{\exp(C_{\varepsilon_2}
\lambda_n(1-\frac{\beta}{n})[u^\sharp(R_1)]^\frac{n}{n-2})}{n[u^\sharp(R_1)]^{\frac{n}{n-2}\left(1-\frac{\beta}{n}\right)}}\\
=&CR_1^{n-\beta}\frac{\exp(\varepsilon_0^{-\frac{2}{n-2}}
\lambda_n(1-\frac{\beta}{n})[u^\sharp(R_1)]^\frac{n}{n-2})}{n[u^\sharp(R_1)]^{\frac{n}{n-2}\left(1-\frac{\beta}{n}\right)}}
\end{align*}
Due to $$\int_{\kappa_nR_1^n}^{+\infty}[f^{**}(s)]^\frac{n}{2}ds\leq(\frac{n}{n-2})^\frac{n}{2}\varepsilon_0,$$
by scaling and then by using Lemma \ref{lm3.2}, we have
\begin{equation*}
\left(R_1^n\frac{\exp(\varepsilon_0^{-\frac{2}{n-2}}\lambda_n[u^\sharp(R_1)]^\frac{n}{n-2})}{[u^\sharp(R_1)]^\frac{n}{n-2}}\right)^{1-\frac{\beta}{n}}\leq C\left(\frac{\int_{R_1}^{+\infty}[u^\sharp(R_1)]^\frac{n}{2}r^{n-1}dr}{\varepsilon_0^\frac{n}{n-2}}\right)^{1-\frac{\beta}{n}}.
\end{equation*}
Therefore we obtain
\begin{equation*}
\int_{\mathcal{W}_{R_1}}\frac{\Phi(\lambda_n(1-\frac{\beta}{n})(u^\sharp)^\frac{n}{n-2})}
{\left(1+(u^\sharp)^{\frac{n}{n-2}\left(1-\frac{\beta}{n}\right)}\right)
F^o(x)^\beta}dx\leq C\|u^\sharp\|_\frac{n}{2}^{\frac{n}{2}\left(1-\frac{\beta}{n}\right)}.
\end{equation*}

Finally, we prove that the inequality expressed in Theorem \ref{main-thm-1} is sharp. We need to show that for any $u\in W^{2,\frac{n}{2}}(\mathbb{R}^n)$, $\|\Delta_F u\|_\frac{n}{2}\leq1$,
 the inequality
\begin{equation}\label{5.200}
\int_{\mathbb{R}^n}\frac{\Phi(\lambda_n(1-\frac{\beta}{n}) |u|^\frac{n}{n-2})}{(1+|u|^{\frac{n}{n-2}(1-\frac{\beta}{n})})F^o(x)^\beta}dx\leq C\|u\|_\frac{n}{2}^{\frac{n}{2}(1-\frac{\beta}{n})}
\end{equation}
fails if we replace the growth
\begin{equation*}
\int_{\mathbb{R}^n}\frac{\Phi(\lambda_n(1-\frac{\beta}{n}) |u|^\frac{n}{n-2})}{(1+|u|^{\frac{n}{n-2}(1-\frac{\beta}{n})})F^o(x)^\beta}dx
\end{equation*}
with the higher-order growth
\begin{equation}\label{5.202}
\int\limits_{\mathbb{R}^{n}} \frac{\Phi(\delta |u|^\frac{n}{n-2})}{\left(1 + |u|^{q}\right)F^o(x)^{\beta}}dx,
\end{equation}
where either $\delta>\lambda_n(1-\frac{\beta}{n})$ and $q=\frac{n}{n-2}(1-\frac{\beta}{n})$
or $\delta=\lambda_n(1-\frac{\beta}{n})$ and $q<\frac{n}{n-2}(1-\frac{\beta}{n})$.
In particular, it is sufficient to prove that (\ref{5.200}) fails if we consider an nonlinearity
form (\ref{5.202}) with $\delta=\lambda_n(1-\frac{\beta}{n})$ and $q<\frac{n}{n-2}(1-\frac{\beta}{n})$.

We argue by contradiction. For fixed  $q<\frac{n}{n-2}(1-\frac{\beta}{n})$ and any $u\in W^{2,\frac{n}{2}}(\mathbb{R}^n)$, $\|\Delta_F u\|_\frac{n}{2}\leq1$, we assume that
\begin{equation}\label{5.300}
\int_{\mathbb{R}^n}\frac{\Phi(\lambda_n(1-\frac{\beta}{n}) |u|^\frac{n}{n-2})}{(1+|u|^{q})F^o(x)^\beta}dx\leq C\|u\|_\frac{n}{2}^{\frac{n}{2}(1-\frac{\beta}{n})}.
\end{equation}

We consider the sequence of test functions $u_\epsilon$ similarly introduced by Lu, Tang and Zhu in \cite{LTZ}:
\begin{equation}\label{5.500}
\begin{aligned}
u_{\epsilon}(x) =
\begin{cases}
[\frac{1}{\lambda_n}\ln\frac{1}{\epsilon}]^\frac{n-2}{n}-\frac{F^o(x)^2}
{(\epsilon\ln\frac{1}{\epsilon})^\frac{2}{n}}+\frac{1}{(\ln\frac{1}{\epsilon})^\frac{2}{n}},\quad
&\text{if}\quad F^o(x)\leq\epsilon^\frac{1}{n},\\
n\lambda_n^{\frac{2}{n}-1}[\ln\frac{1}{\epsilon}]^{-\frac{2}{n}}\ln\frac{1}{F^o(x)},\quad&\text{if}\quad\epsilon^\frac{1}{n}\leq F^o(x)\leq1, \\
0,\quad&\text{if}\quad F^o(x)>1,
\end{cases}
\end{aligned}
\end{equation}
where $\epsilon$ is sufficiently small and positive number.
Through the co-area formula (\ref{2.1}) in anisotropic form, we have
\begin{eqnarray}\label{5.210}
\|u_\epsilon\|_\frac{n}{2}^\frac{n}{2}  &= & \int_{\W_{\epsilon^\frac{1}{n}}}|u_\epsilon|^{\frac{n}{2}} dx+\int_{\W_{1}\backslash\W_{\epsilon^\frac{1}{n}}}|u_\epsilon|^{\frac{n}{2}} dx
\nonumber\\
&= & \epsilon O[(\ln\frac{1}{\epsilon})^{\frac{n-2}{2}}]+n(n-2)^{-\frac{n}{2}}
\frac{1}{\ln \frac{1}{\epsilon}}\int_{\epsilon^\frac{1}{n}}^{1}t^{n-1}(\ln \frac{1}{t})^{\frac n2}dt\nonumber\\
&= & O(\frac{1}{\ln\frac{1}{\epsilon}}).
\end{eqnarray}
When $0\leq F^o(x)\leq \epsilon^\frac{1}{n}$, by Lemma \ref{lm-2.1}, we have
\begin{eqnarray*}
\Delta_F u_\epsilon
 & = & \text{div}(F(\nabla u_\epsilon)F_{\xi}(\nabla u_\epsilon))
\nonumber\\
& = &-\frac{1}{(\epsilon\ln\frac{1}{\epsilon})^\frac{2}{n}}
\text{div}(        F(2F^o(x) \nabla F^o(x)  ) F_{\xi}(2F^o(x)\nabla F^o(x)  )      ) \nonumber\\
& = & -\frac{2}{(\epsilon\ln\frac{1}{\epsilon})^\frac{2}{n}}\text{div}(x)\nonumber\\
& = & -\frac{2n}{(\epsilon\ln\frac{1}{\epsilon})^\frac{2}{n}}.
\end{eqnarray*}
Similarly, when $\epsilon^\frac{1}{n}\leq F^o(x)\leq 1$, we have
\begin{eqnarray*}
\Delta_F u_\epsilon
 & = & \text{div}(F(\nabla u_\epsilon)F_{\xi}(\nabla u_\epsilon))
\nonumber\\
& = &-n\lambda_n^{\frac{2}{n}-1}[\ln\frac{1}{\epsilon}]^{-\frac{2}{n}}\text{div}(        F(\frac{\nabla F^o(x)}{F^o(x)}  ) F_{\xi}(\frac{\nabla F^o(x)}{F^o(x)} )      ) \nonumber\\
& = & -n\lambda_n^{\frac{2}{n}-1}[\ln\frac{1}{\epsilon}]^{-\frac{2}{n}}\text{div}(\frac{x}{F^o(x)^2})\nonumber\\
& = & -n(n-2)\lambda_n^{\frac{2}{n}-1}[\ln\frac{1}{\epsilon}]^{-\frac{2}{n}}\frac{1}{F^o(x)^2}.
\end{eqnarray*}
Thus by the co-area formula (\ref{2.1}), we get
\begin{eqnarray*}
\|\Delta_F u_\epsilon\|_\frac{n}{2}^\frac{n}{2}  &= & \int_{\W_{\epsilon^\frac{1}{n}}}|\Delta_F u_\epsilon|^{\frac{n}{2}} dx+\int_{\W_{1}\backslash\W_{\epsilon^\frac{1}{n}}}|\Delta_F u_\epsilon|^{\frac{n}{2}} dx\nonumber\\
&= & \frac{(2n)^{\frac{n}{2}}\kappa_n}{\ln\frac{1}{\epsilon}}+n^{\frac{n}{2}}(n-2)^{\frac{n}{2}}\lambda_n^{1-\frac{n}{2}}
\frac{1}{\ln\frac{1}{\epsilon}}\int_{\epsilon^\frac{1}{n}}^{1}\frac{n\kappa_n r^{n-1}}{r^{n}}dr
\nonumber\\
&= & O(\frac{1}{\ln\frac{1}{\epsilon}})+1.
\end{eqnarray*}
So we can see that
\begin{equation}\label{5.600}
1\leq\|\Delta_F u_\epsilon\|_\frac{n}{2}^\frac{n}{2}\leq1+O(\frac{1}{\ln\frac{1}{\epsilon}}).
\end{equation}
Now let
\begin{equation*}
\widetilde{u}_\epsilon=\frac{u_\epsilon}{\|\Delta_Fu_\epsilon\|_\frac{n}{2}}.
\end{equation*}
It is clear that
\begin{equation*}
\|\widetilde{u}_\epsilon\|_\frac{n}{2}^\frac{n}{2}=O(\frac{1}{\ln\frac{1}{\epsilon}}).
\end{equation*}
By (\ref{5.300}), it follows that
\begin{equation}\label{5.400}
\int\limits_{\mathbb{R}^{n}} \frac{\Phi(\lambda_n(1-\frac{\beta}{n})|\widetilde{u}_\epsilon|^\frac{n}{n-2})}{\left(1 + |\widetilde{u}_\epsilon|^{q}\right)F^o(x)^{\beta}}dx \leq C \|\widetilde{u}_\epsilon\|_\frac{n}{2}^{\frac{n}{2}(1-\frac{\beta}{n})}\leq
C\left(\frac{1}{(\ln\frac{1}{\epsilon})^{^{1-\frac{\beta}{n}}}}\right).
\end{equation}
Also, noting that $u_\epsilon\geq[\frac{1}{\lambda_n}\ln\frac{1}{\epsilon}]^\frac{n-2}{n}$ on $\mathcal{W}_{\epsilon^{\frac{1}{n}}}$, we have that
\begin{equation*}
\begin{aligned}
&\int\limits_{\mathbb{R}^{n}} \frac{\Phi(\lambda_n(1-\frac{\beta}{n})|\widetilde{u}_\epsilon|^\frac{n}{n-2})}{\left(1 + |\widetilde{u}_\epsilon|^{q}\right)
F^o(x)^{\beta}} dx\\
 &\geq \int\limits_{\mathcal{W}_{\epsilon^{\frac{1}{n}}}} \frac{\Phi(\lambda_n(1-\frac{\beta}{n})|\widetilde{u}_\epsilon|^\frac{n}{n-2})}{\left(1 + |\widetilde{u}_\epsilon|^{q}\right)F^o(x)^{\beta}} dx\\
&\geq C \int\limits_{\mathcal{W}_{\epsilon^{\frac{1}{n}}}} \frac{\exp(\lambda_n(1-\frac{\beta}{n})|\widetilde{u}_\epsilon|^\frac{n}{n-2})}{|\widetilde{u}_\epsilon|^{q}F^o(x)^{\beta}} dx\\
&\geq C
\exp\left(\left(1-\frac{\beta}{n}\right)\ln\frac{1}{\epsilon}
\left(\frac{1}{\|\Delta_Fu_\epsilon\|_\frac{n}{2}^\frac{n}{n-2}}\right)\right)\left(\ln\frac{1}{\epsilon}\right)^{-\frac{n-2}{n}q}
\epsilon^{1-\frac{\beta}{n}}.
\end{aligned}
\end{equation*}
Hence we have
\begin{equation*}
\lim_{\epsilon\rightarrow 0} \left(\ln \frac{1}{\epsilon} \right)^{1-\frac{\beta}{n}}\int\limits_{\mathbb{R}^{n}} \frac{\Phi(\lambda_n(1-\frac{\beta}{n}|\widetilde{u}_\epsilon|)}{\left(1 + |\widetilde{u}_\epsilon|^{q}\right)F^o(x)^{\beta}} dx
\geq C\lim_{\epsilon\rightarrow 0}\left(\ln\frac{1}{\epsilon}\right)^{1-\frac{\beta}{n}-\frac{n-2}{n}q}=\infty,
\end{equation*}
which contradicts (\ref{5.400}).
Thus we completed the proof of Theorem \ref{main-thm-1}. \qed
\section{Proof of Theorem \ref{main-thm-2}}
In this section, using the similar trick as the proof of Theorem \ref{main-thm-1},
we prove the singular anisotropic Adams' type inequality on bounded domain $\Omega\subset \mathbb{R}^n$.

{\bf Proof of Theorem \ref{main-thm-2}}:
We set $f=-\Delta_F u$ in $\Omega$. According to Lemma \ref{lm2.2}, we have
\begin{align}\label{7.1}
\int_{0}^{|\Omega|}[f^{\ast\ast}(r)]^\frac{n}{2}dr&\leq\left(\frac{n}{n-2}\right)^\frac{n}{2}\int_{0}^{|\Omega|}[f^\ast(r)]^\frac{n}{2}dr\nonumber \\
&=\left(\frac{n}{n-2}\right)^\frac{n}{2}\|\Delta_F u\|_\frac{n}{2}^\frac{n}{2}\leq\left(\frac{n}{n-2}\right)^\frac{n}{2}.
\end{align}
Then, by Lemma \ref{lm3.1}, we have
\begin{equation*}
0\leq u^\ast(r)\leq\frac{1}{[n\kappa_n^\frac{1}{n}]^2}\int_r^{|\Omega|}\frac{f^{\ast\ast}(\xi)}{\xi^{1-\frac{2}{n}}}d\xi, \text{ for a.e. } 0<r\leq|\Omega|.
\end{equation*}
Hence we can estimate
\begin{eqnarray*}
\int_{\Omega}       \frac{ e^{\lambda_n(1-\frac{\beta}{n}) |u|^\frac{n}{n-2}} }{ F^o(x)^\beta } dx
&=&\int_{\Omega^{\sharp}}\frac{ e^{\lambda_n(1-\frac{\beta}{n})
[u^*(\kappa_n F^o(x)^n )]^\frac{n}{n-2}} }{ F^o(x)^\beta } dx\nonumber\\
&=&\int_{0}^{|\Omega|}\frac{ \kappa_n^{\frac{\beta}{n}}e^{\lambda_n(1-\frac{\beta}{n}) u^*(r)^\frac{n}{n-2}} }{ r^{\frac{\beta}{n}} } dr\nonumber\\
&\leq & \int_0^{|\Omega|}\frac{\kappa_n^{\frac{\beta}{n}}e^{\left(\frac{n-2}{n}(1-\frac{\beta}{n})^{\frac{n-2}{n}}
\int_r^{|\Omega|}\frac{f^{\ast\ast}(\xi)}{\xi^{1-\frac{2}{n}}}d\xi\right)^\frac{n}{n-2}}}{r^{\frac{\beta}{n}}}dr.
\end{eqnarray*}
Making the change of variable $r=|\Omega|e^{-t}$, we have
\begin{align}
&\int_{\Omega}       \frac{ e^{\lambda_n(1-\frac{\beta}{n}) |u|^\frac{n}{n-2}} }{ F^o(x)^\beta } dx\nonumber\\
\leq & |\Omega|^{1-\frac{\beta}{n}}\kappa_n^{\frac{\beta}{n}}\int_0^{\infty}\exp\left\{\left(\frac{n-2}{n}(1-\frac{\beta}{n})^{\frac{n-2}{n}}\int_{|\Omega|e^{-t}}^{|\Omega|}
\frac{f^{\ast\ast}(\xi)}{\xi^{1-\frac{2}{n}}}d\xi\right)^\frac{n}{n-2}-t(1-\frac{\beta}{n})\right\}.
\label{7.2}
\end{align}
Let
\begin{equation*}
\phi(s)=\frac{n-2}{n}|\Omega|^\frac{2}{n}f^{\ast\ast}(|\Omega|e^{-s})e^{-\frac{2s}{n}},\quad \forall s\geq0.
\end{equation*}
It is clear that $\phi(s)\geq0$. By (\ref{7.1}), we obtain
\begin{align*}
\int_0^{+\infty}[\phi(s)]^\frac{n}{2}ds&=\left(\frac{n-2}{n}\right)^\frac{n}{2}|\Omega|\int_0^{+\infty}[f^{\ast\ast}(|\Omega|e^{-s})]^\frac{n}{2}e^{-s}ds \\
&=\left(\frac{n-2}{n}\right)^\frac{n}{2}\int_0^{|\Omega|}[f^{\ast\ast}(r)]^\frac{n}{2}dr\leq1.
\end{align*}
Define
\begin{align*}
F(t)&=(1-\frac{\beta}{n})t-(1-\frac{\beta}{n})\left(\int_0^t\phi(s)ds\right)^\frac{n}{n-2}\\
&=(1-\frac{\beta}{n})t-\left(\frac{n-2}{n}(1-\frac{\beta}{n})^{\frac{n-2}{n}}|\Omega|^\frac{2}{n}\int_0^tf^{\ast\ast}(|\Omega|e^{-s})e^{-\frac{2s}{n}}ds\right)^\frac{n}{n-2}\\
&=(1-\frac{\beta}{n})t-\left(\frac{n-2}{n}(1-\frac{\beta}{n})^{\frac{n-2}{n}}
\int_{|\Omega|e^{-t}}^{|\Omega|}
\frac{f^{\ast\ast}(\xi)}{\xi^{1-\frac{2}{n}}}d\xi\right)^\frac{n}{n-2}.
\end{align*}
Recalling (\ref{7.2}), the proof of Theorem \ref{main-thm-2} reduces to showing that
\begin{equation*}
\int_0^{+\infty}e^{-F(t)}dt\leq c_0.
\end{equation*}
which is nothing but the Lemma \ref{lm4.1}.

Now we use test function in (\ref{5.500}) to prove the sharpness. Let $\widetilde{u}_\epsilon=\frac{u_\epsilon}{\|\Delta_F u_\epsilon\|_\frac{n}{2}}$ and $\delta>\lambda_n(1-\frac{\beta}{n})$. By (\ref{5.210}) and (\ref{5.600}), we know
\begin{equation*}
\|u_\epsilon\|_\frac{n}{2}^\frac{n}{2}=O\left(\frac{1}{\ln\frac{1}{\epsilon}}\right),\quad 1\leq\|\Delta_F u\|_\frac{n}{2}^\frac{n}{2}=1+O\left(\frac{1}{\ln\frac{1}{\epsilon}}\right).
\end{equation*}
Since $u_\epsilon\geq[\frac{1}{\lambda_n}\ln\frac{1}{\epsilon}]^\frac{n-2}{n}$ on $\mathcal{W}_{\epsilon^\frac{1}{n}}$, we have
\begin{align*}
\sup\limits_{u\in W^{2,\frac{n}{2}}(\Omega), \|\Delta_F u\|_\frac{n}{2}\leq1}\int_{\Omega}
\frac{ e^{\delta |u|^\frac{n}{n-2}} }{ F^o(x)^\beta } dx
&\geq\lim\limits_{\epsilon\rightarrow0}\int_{\mathcal{W}_{\epsilon^\frac{1}{n}}}
\frac{e^{\delta \widetilde{u}_\epsilon^\frac{n}{n-2}}}{F^o(x)^\beta}dx\\
&\geq\lim\limits_{\epsilon\rightarrow0} \epsilon\kappa_n\left(e^{\frac{1}{\|\Delta_F u_\epsilon\|_\frac{n}{2}^\frac{n}{n-2}}\frac{\delta}{\lambda_n(1-\frac{\beta}{n})}\ln\frac{1}{\epsilon}}\right)
\\
&=+\infty.
\end{align*}
Thus we completed the proof of Theorem \ref{main-thm-2}. \qed

\end{document}